\documentclass[reqno]{amsart}
\usepackage{amsbsy,amssymb,amscd,amsfonts,latexsym,amstext,delarray,
amsmath,graphicx}
%\setlength{\textheight}{22.5cm} \headsep=15pt
%%change height page
%\setlength{\textwidth}{15.5cm}
%%change width page
%\setlength{\oddsidemargin}{0.5cm} \setlength{\topmargin}{-.5cm}
%\setlength{\evensidemargin}{\oddsidemargin}
\tolerance=3500
\sloppy

\setlength{\parindent}{1em}

\newenvironment{nitemize}
{\begin{list}{$\bullet$}
{\setlength{\parsep}{1ex}
\setlength{\topsep}{1.1ex}
\setlength{\partopsep}{0ex}
\setlength{\labelwidth}{0.5cm}
\setlength{\itemindent}{0cm}
\setlength{\itemsep}{0cm}
\setlength{\leftmargin}{0.7cm}
\setlength{\labelsep}{0.2cm}}}
{\end{list}}

\newtheorem{theorem}{Theorem}[section]

\newtheorem{corollary}[theorem]{Corollary}
\newtheorem{lemma}[theorem]{Lemma}
\newtheorem{property}[theorem]{Property}

\theoremstyle{remark}
\newtheorem{remark}[theorem]{Remark}

\theoremstyle{definition}

\theoremstyle{example}

\setcounter{secnumdepth}{6} \setcounter{tocdepth}{3}
\numberwithin{equation}{section}

\def\Z{\mathbb Z}

\def\cE{\mathcal{E}}

\def\cM{\mathcal{M}}

\newcommand{\ie}{{\it i.e.\/}\ }

\newcommand{\cf}{{\it cf.\/}\ }

\def\text{\hbox}

\renewcommand{\limsup}{\ensuremath{\overline{\lim}\ }}
\renewcommand{\liminf}{\ensuremath{\underline{\lim}\ }}

\title[]{Strengthened large deviations for rational maps and full shifts, with unified proof}

\author{Henri Comman}
\address{Department of Mathematics, University
of Santiago de Chile, Bernardo O'Higgins 3363, Santiago, Chile}
\email{hcomman@mat.usach.cl}

\date{}
\begin{document}

% VERSION PARA LA WEB IDENTICO A DYN-SYS-HYP-RAT-MAP-REV-8 +CORRECCIONES GALLEY-PROOFS %(CORRESPONDE EXACTAMENTE A LA VERSION PUBLICADA).

\begin{abstract}
For any hyperbolic rational map and any net of Borel probability measures
on the space of Borel probability measures on the  Julia set, we show that this net  satisfies a strong form of the
large deviation principle with a  rate function given by the
entropy map if and only if the large deviation and the pressure
functionals coincide. To each such principles
corresponds an expression for the entropy of invariant measures.
We give the explicit form of the rate function of  the corresponding  large deviation
principle in the real line for the net of image measures obtained by evaluating the function
$\log|T'|$.  These results  are applied to various examples
including those considered in the literature where only upper bounds
have been proved.
The proof rests on some entropy-approximation property (independent of the net of measures),  which in a suitable formulation, is nothing but the hypothesis involving exposed points in Baldi's theorem. In particular, it
  works verbatim for general dynamical systems. After  stating  the corresponding  general version, as another example we consider the  multidimensional full shift for which the above property has been recently proved, and   we establish  large deviation principles for nets of measures analogous to those of the rational maps case.
  \end{abstract}

\maketitle

%\tableofcontents

\section{Introduction}\label{intro}

The aim of this paper is threefold; it presents large deviation principles arising from  two specific  dynamical systems, which turn to be typical examples of a general  scheme valid for any dynamical system. Let us  begin by detailing   the first example.

\subsection{The case of rational maps}

Let  $T$ be  a hyperbolic rational map of degree at least  $2$ on the
Riemann sphere, and let $J$ be  its Julia set.
Large deviations for
   various types of sequences  of  probability measures
  on the set $\mathcal{M}(J)$ of Borel probability measures on
 $J$ have been studied in the literature.
 We refer here  to the
distribution of pre-images and periodic points
 (\cite{pol_CMP_96}, \cite{Pollicott-Sridharan-07-JDSGT}), and  the
 Birkhoff averages with respect to the measure of
maximal entropy (\cite{lop}). However, except for this last case,
only the large deviation upper-bounds have been proved, and in all
cases the rate function has the form
\begin{equation}\label{rate-function-intro}
I^f(\mu)=\left\{
\begin{array}{ll}
P(T,f)-\mu(f)-h_{\mu}(T) & \textnormal{if}\ \mu\in\mathcal{M}(J,T)
\\ \\
+\infty & \textnormal{if}\
\mu\in\mathcal{M}(J)\verb'\'\mathcal{M}(J,T),
\end{array}
\right.
\end{equation}
  where  $f$ is some parameter belonging to the set $C(J)$  of real-valued
continuous functions on $J$, and  $P(T,\cdot)$, $h_{\cdot}(T)$,
$\mathcal{M}(J,T)$ denote respectively  the  pressure map, the
entropy map, and the set of invariant elements of $\mathcal{M}(J)$.
For
each net $(\nu_\alpha)$ of Borel
probability measures on $\mathcal{M}(J)$ satisfying a large
deviation principle with powers $(t_\alpha)$ and rate function $I^f$, the Varadhan's
theorem yields the following equality,
\begin{equation}\label{equality-of-functional-intro}
 \forall g\in C(J),\ \ \ \ \
\mathbb{L}(\widehat{g}):=\lim t_\alpha\log\nu_\alpha(e^{\widehat{g}/t_\alpha})=
P(T,f+g)-P(T,f),\end{equation} where $\widehat{g}(\mu)=\mu(g)$ for all $\mu\in\mathcal{M}(J)$.
 Our main result
establishes the converse, namely,
(\ref{equality-of-functional-intro}) implies the large deviation
principle with rate function $I^f$ (Theorem \ref{LDP}). From   a well-known result in large deviation theory, the
upper-bounds with $I^f$ follow from  the mere inequality $``\le"$ in (\ref{equality-of-functional-intro}) with moreover
only a upper limit in the L.H.S.; this simple fact allows us to recover the upper bounds proved in \cite{Pollicott-Sridharan-07-JDSGT}, \cite{pol_CMP_96}  (\cf Remark \ref{remark-upper-bounds}).
  The difficult part consists in
proving the lower-bounds. The proof rests on the combining of two
ingredients. The first is a certain entropy-approximation property for
hyperbolic rational maps, namely, any invariant measure can be
approximated weakly$^*$ and in entropy by a sequence of measures,
each one being the unique equilibrium state for some H\"{o}lder
continuous function; the second one is an application of a general
large deviation  result in topological vector spaces, the Baldi's
theorem. More precisely, the net $(\nu_\alpha)$ is considered as acting on the vector space $\widetilde{\mathcal{M}}(J)$ of
signed Borel measures on $J$.
 The sufficient condition for the  large deviation principle in  Baldi's
 theorem involves exposed points and
 exposing hyperplanes  of the  functional $\overline{\mathbb{L}}$, defined with a upper limit  in (\ref{equality-of-functional-intro}) (\cf \S \ref{prelim}). Part of this condition is ensured by
(\ref{equality-of-functional-intro}), and the  remaining is given by
the approximation property; indeed, the statement  $``\mu$ is an
exposed point of the Legendre-Fenchel transform $\overline{\mathbb{L}}^*$ with exposing hyperplane $\widehat{g}"$  is just an
abstract  formulation of  $``\mu$ is the unique equilibrium state
for $f+g"$.

 The main advantage  in having the large deviation principle
 follows from the fact that  the rate function $I^f$ is affine, which  implies that
convex open sets containing some invariant measures are $I^f$-continuity sets (\cf \S \ref{prelim}), and so  we get
  limits on these sets. Furthermore, in view of the form of the rate function (\ref{rate-function-intro}), the above entropy-approximation property is nothing but a continuity property of $I^f$; this  gives a strengthened form of the  large deviation principle  in the sense that the limit on  convex open sets is given up to $\varepsilon$ by $I^f(\mu_{f+g_\varepsilon})$, where $\mu_{f+g_\varepsilon}$ is  the unique equilibrium state for some H\"{o}lder continuous function $f+g_{\varepsilon}$.
  As a consequence, to each of  such large deviation principles and for
  each invariant measure $\mu$, corresponds
    an  expression of the entropy $h_\mu(T)$ in terms of these limits.

  Practically, to each way of obtaining the pressure in the sense of
    (\ref{equality-of-functional-intro}) corresponds a large
    deviation principle.
    This is the case  for every $f\in C(J)$ with the sequence
    \[\nu_{n,f}:=\sum_{y\in
J_n}
p_{n,f}(y)\delta_{\frac{1}{n}(\delta_y+...+\delta_{T^{n-1}y})},\]
where  $J_n$ is a maximal $(\varepsilon,n)$-separated set with
$\varepsilon$ small enough, and
\[p_{n,f}(y)=\frac{e^{f(y)+...+f(T^{n-1}y)}}{\sum_{z\in
J_n}e^{f(z)+...+f(T^{n-1}z)}}.\]
This case is new
and constitutes our first example (\S \ref{subsection-separated-set}).
 It is generic in the sense that it
suffices to replace  $J_n$ either by the set of $n$-preimages or
$n$-periodic points  (with moreover $f$  H\"{o}lder
continuous) in order to get the same results for the
corresponding measures  (\S \ref{pre-per}); this
   improves \cite{Pollicott-Sridharan-07-JDSGT} and \cite{pol_CMP_96} by giving  the lower-bounds.
   The large deviation principle   concerning
    the distribution of   Birkhoff averages with
respect to the measure of maximal entropy (proved in \cite{lop}) is a direct consequence of our  main result (Remark \ref{Birk-general}).
The
expressions of  the entropy corresponding to the above examples are
given by  (\ref{ex-pre-per-eq8.1.0}), (\ref{ex-pre-per-eq8.1.1}),
 (\ref{level-2-birk-general-eq7}).

    Also, by the so-called
    contraction principle  with any $k\in C(J)$ (\cite{dem}, \cite{com-TAMS-03}),
    we get a level-1 large deviation principle  for
    the
   sequence $(\widehat{k}[\nu_n])$ of image measures (Corollary
   \ref{LDP-level-1}). The case  $f=-t\log|T'|$ and $k=\log|T'|$ is studied in detail
   (Theorem \ref{level-1-Lya-theo}); in particular, this yields
    various  formulas  for $h_{\mu_s}(T)$, where $\mu_s$ is  the
 equilibrium state for $-s\log|T'|$
 (\cf (\ref{level-1-pre-per-Lya-eq9}), (\ref{level-1-pre-per-Lya-eq11}),
(\ref{level-1-Birk-general-eq1})).
Some of these results are
analogues of those of \cite{Kel-Now_CMP_92} concerning unimodal
interval maps; it is the case for the existence of a strictly
negative limit in (\ref{level-1-pre-per-Lya-eq10}) with $t=0$ and
(\ref{level-1-Birk-general-eq2}), but we have  more here since
the  bounds are explicit.

We draw attention to the fact that  (except those obtained by contraction as above)   we are  concerned here  only with  level-2 large deviation principles  (\ie in the space  $\cM(J)$),
 and we do not take up  level-1 large deviations (\ie in the real line) with some fixed potential and reference measure (see \cite{Rey-Bellet_Young-08-ETDS28}, \cite{Melbourne_Nicol-08-TAMS360} and references therein for recent developments on this topic).

\subsection{Generalization and the example of the multidimensional full shift}

In the last part of the paper (\S \ref{section-extension}), we observe that the proof   of the main theorem (Theorem \ref{LDP})  works as well  for any dynamical system $(\Omega,\tau)$ (where $\Omega$ is the phase space and $\tau$ the semi-group or group action)  satisfying a similar  entropy-approximation property, and for any  net of Borel probability measures on $\mathcal{M}(\Omega)$ satisfying (\ref{equality-of-functional-intro}) (after obvious changes of notations).
 Indeed,  roughly speaking, $I^f$ is nothing but the Legendre-Fenchel transform of the pressure (Lemma \ref{relation-l*-Hmu})  seen as the  large deviation functional associated to the underlying net of measures,
 so that the above  conditions ensure that the hypotheses of Baldi's theorem are satisfied and  the large deviations follow.
The corresponding result is stated in Theorem \ref{LDP-general}; as for rational maps, it constitutes a strong form of a large deviation principle with rate function $I^f$, since for each invariant measure $\mu$,  $I^f(\mu)$ can be approximated by $I^f(\mu_i)$ where $\mu_i$ is the unique equilibrium state for some potential (Remark \ref{remark-strengthened}). Again here, only  the inequality  $"\le"$ in (\ref{equality-of-functional-intro}) with  a  upper limit in the L.H.S. is required to get   the upper bounds  with  $I^f$.

It turns out that recently  Gurevich and Tempelman (\cite{Gurevich-Tempelman-05-PTRF}, Theorem 1) proved  the entropy-approximation property for the multidimensional full shift. As a consequence, we obtain large deviation principles for nets constructed with  maximal separated sets (resp. periodic configurations), exactly as for rational maps; both results are new (Theorem \ref{LDP-fullshift-atomic}).

\subsection{Important  remark}\label{important-remark}

  Although  some of the large deviation principles  concerning hyperbolic rational maps proved here  can be proved as well using  Kifer's techniques (\cite{Kifer-94-CMP}, \cite{Kifer-90-TAMS}), this is  generally not the case.
Indeed,
for any dynamical system   $(\Omega,\tau)$ (as in \S \ref{section-extension}) and any $f\in C(\Omega)$,
these techniques combined with the  general  version of Lemma \ref{relation-l*-Hmu}  yield the following  result:  If there exists a countably generated   dense  vector  subspace $V\subset C(\Omega)$ such that $f+g$ has a  unique equilibrium state for all $g\in V$, then any net of Borel probability measures on $\mathcal{M}(\Omega)$ such that
\begin{equation}\label{important-remark-eq1}
\forall g\in V,\ \ \ \ \ \mathbb{L}(\widehat{g})=
P^{\tau}(f+g)-P^{\tau}(f),
\end{equation}
satisfies a large deviation principle with rate function $I^f$.
 In the case of hyperbolic rational maps and  under the assumption that $f$ is H\"{o}lder   continuous, it suffices to take $V$ the  space of such functions and  to apply the above result. However, whatever $V$ satisfying the  above  conditions, taking $g=0$ implies  that $f$ has a unique equilibrium state, and so  this method  does not work for general  $f$,  as in  \S \ref{subsection-separated-set}.
   This observation
  is
 valid a fortiori for the multidimensional time setting, as in \S   \ref{multidimensional-fullshift}
 where Theorem \ref{LDP-fullshift-atomic}  works  for every $f\in C(\Omega)$.
 So, the entropy-approximation property,  a purely dynamical one (\ie  independent of the  measures) gives  a  direct way to get large deviations for any net of measures satisfying (\ref{important-remark-eq1}), which furthermore can work when   usual techniques fail.
  It occurs in the two systems of  distinct nature  considered here, and  a  natural question arises about the  generality of such a property; also, it would be interesting to study the relation with the above mentioned Kifer's condition.

 \subsection{Organization }In \S \ref{prelim} we recall basic facts  of  large deviation theory, and in particular Baldi's theorem.  The next two sections  deal with rational maps; \S \ref{general-results} presents  the general results, and \S \ref{section-examples} the  examples. The last section treats the generalization and the example of multidimensional full shift.

\section{Preliminaries}\label{prelim}

 Let $X$ be
a Hausdorff regular  topological space, let $(\nu_\alpha)$ be a net  of
Borel probability measures on $X$, and let $(t_\alpha)$ be a net in $]0,+\infty[$ converging to $0$. We say that $(\nu_\alpha)$ satisfies a \textit{large
deviation principle with powers $(t_\alpha)$} if there exists a
$[0,+\infty]$-valued lower semi-continuous function $I$  on $X$ such
that
\[\limsup t_\alpha\log\nu_{\alpha}(F)\le-\inf_{x\in F}I(x)\le-\inf_{x\in G}I(x)\le
 \liminf t_\alpha\log\nu_\alpha(G)\]
for all closed sets $F\subset X$ and all open sets $G\subset X$ with $F\subset
G$; such a  function $I$ is then unique (called the \textit{rate
function}) and  given  for each $x\in X$ by
\begin{equation}\label{preliminaries-eq2}
-I(x)=\inf_{G\in\mathcal{G}_x}\liminf t_\alpha\log\nu_\alpha(G)=
\inf_{G\in\mathcal{G}_x}\limsup t_\alpha\log\nu_\alpha(G),
\end{equation} where
$\mathcal{G}_x$ is any local basis at $x$.
When the above large deviation principle holds, we have
\begin{equation}\label{preliminaries-eq4}
\lim t_\alpha\log\nu_\alpha(Y)=- \inf_{x\in Y} I(x)
\end{equation}
 for all  Borel sets $Y\subset X$ satisfying $\inf_{x\in\textnormal{Int\ }{Y}} I(x)=\inf_{x\in\overline{Y}}  I(x)$ (where $\textnormal{Int\ }{Y}$ denotes the interior of $Y$), and  we can replace $Y$ by $\textnormal{Int\ }{Y}$ (resp. $\overline{Y}$) in (\ref{preliminaries-eq4}) (such  sets $Y$ are  called \textit{$I$-continuity} sets).

The \textit{large
deviation functional $\overline{\mathbb{L}}$} associated to
$(\nu_\alpha)$ and  $(t_\alpha)$ is defined on the set of $[-\infty,+\infty[$-valued Borel
functions $h$ on $X$ by
\begin{equation}\label{preliminaries-eq5}
\overline{\mathbb{L}}(h)=\limsup t_\alpha\log\nu_\alpha(e^{h/t_\alpha}).
\end{equation}
Note that
$\overline{\mathbb{L}}$ is continuous with respect to  the uniform metric. We
 write $\mathbb{L}(h)$ when the limit exists.  Assume
furthermore that $X$ is a real topological vector space with
topological dual $X^*$, and let $\overline{\mathbb{L}}^*$ denotes
the Legendre-Fenchel transform of $\overline{\mathbb{L}}_{\mid X^*}$ (this restriction is the so-called "generalized log-moment generating function").
 An element
 $x\in X$ is an \textit{exposed point} of  $\overline{\mathbb{L}}^*$ if there exists
  $\lambda\in X^*$ (called
 \textit{exposing hyperplane})
  such that
 \[
 \forall y\neq x,\ \ \ \ \ \lambda(x)-\overline{\mathbb{L}}^*(x)>
 \lambda(y)-\overline{\mathbb{L}}^*(y).
 \]

 The main tool
to derive the large deviation lower bounds is the following
classical result of Baldi (\cite{bal}, \cite{dem}; see also
\cite{com3} for a strengthened version). We shall apply it with
$X=\widetilde{\mathcal{M}}(\Omega)$ (where $\Omega$ is the phase space of the system) by showing   that the approximation property mentioned in \S \ref{intro}  implies the condition on exposed points.
 We recall that  $(\nu_\alpha)$ is said to be
\textit{exponentially tight with respect to $(t_\alpha)$}
 if for each real $M$ there exists a compact
 $K_M\subset X$  such that
$\limsup t_\alpha\log\nu_\alpha(X\verb'\'K_M)<M$; our nets of measures are trivially exponentially tight since
they are constituted by measures supported by
 the compact set $\mathcal{M}(\Omega)$ (in particular, the boundness  condition  on $\overline{\mathbb{L}}_{\mid X^*}$ is always satisfied).

\begin{theorem}\label{Baldi-theorem}\textnormal{(\textbf{Baldi})}
\label{Baldi} Let $X$ be  a  real  Hausdorff topological vector
space and assume that $(\nu_\alpha)$ is exponentially tight with respect to $(t_\alpha)$. Let
$\mathcal{E}$ be the set of exposed points $x$ of \
$\overline{\mathbb{L}}^*$ for which there is an exposing hyperplane
$\lambda_x$ such that $\mathbb{L}(\lambda_x)$ exists and
$\overline{\mathbb{L}}(c\lambda_x)<+\infty$ for some $c>1$. If
$\inf_{G}\overline{\mathbb{L}}^*=\inf_{G\cap\mathcal{E}}\overline{\mathbb{L}}^*$ for
all open sets $G\subset X$, then $(\nu_\alpha)$ satisfies a large deviation
principle with powers $(t_\alpha)$ and  rate function $\overline{L}^*$.
\end{theorem}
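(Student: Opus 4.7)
The plan is to establish the upper and lower bounds separately; the upper bound is a routine Chebyshev-plus-tightness argument, while the lower bound is where the exposed-point hypothesis does all the work via a change-of-measure (tilting) construction.

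\textbf{Upper bound.} For any closed $F\subset X$, exponential tightness reduces the problem to bounding $\limsup t_\alpha\log\nu_\alpha(F\cap K_M)$ on a compact $K_M$. For each $x\in K_M$ with $\overline{\mathbb{L}}^*(x)>r$, the definition of the Legendre--Fenchel transform supplies a $\lambda\in X^*$ and $\delta>0$ with $\lambda(x)-\overline{\mathbb{L}}(\lambda)-\delta>r$, and continuity of $\lambda$ provides a neighborhood $U_x$ on which $\lambda\ge\lambda(x)-\delta$. Markov's inequality then gives $t_\alpha\log\nu_\alpha(U_x)\le -\lambda(x)+\delta+t_\alpha\log\int e^{\lambda/t_\alpha}\,d\nu_\alpha$, whose upper limit is strictly less than $-r$. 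A finite subcover of $F\cap K_M$, together with the standard ``max-log'' lemma $\limsup t_\alpha\log(a_\alpha+b_\alpha)=\max\{\limsup t_\alpha\log a_\alpha,\limsup t_\alpha\log b_\alpha\}$, delivers the upper bound.

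\textbf{Lower bound at exposed points.} Fix $x\in\mathcal{E}$ with exposing hyperplane $\lambda=\lambda_x$, and introduce the tilted probability measure
\[
d\widetilde{\nu}_\alpha=\frac{e^{\lambda/t_\alpha}}{Z_\alpha}\,d\nu_\alpha,\qquad Z_\alpha=\int e^{\lambda/t_\alpha}\,d\nu_\alpha.
\]
For any open $G\ni x$ and any open subneighborhood $V\subset G$ on which $\lambda\le\lambda(x)+\varepsilon$, the trivial rewriting
\[
t_\alpha\log\nu_\alpha(G)\ge t_\alpha\log Z_\alpha-\lambda(x)-\varepsilon+t_\alpha\log\widetilde{\nu}_\alpha(V)
\]
holds. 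Since $\mathbb{L}(\lambda)$ exists one has $t_\alpha\log Z_\alpha\to\mathbb{L}(\lambda)$, and the exposed-point identity forces $\mathbb{L}(\lambda)-\lambda(x)=-\overline{\mathbb{L}}^*(x)$. Thus everything reduces to showing that $\widetilde{\nu}_\alpha(V)\to 1$ (or merely that $t_\alpha\log\widetilde{\nu}_\alpha(V)\to 0$).

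\textbf{Concentration of the tilted measure (the main difficulty).} This is where the integrability $\overline{\mathbb{L}}(c\lambda)<+\infty$ for some $c>1$ is used. A H\"older inequality combined with this bound gives exponential tightness of $(\widetilde{\nu}_\alpha)$ and finiteness of its generating function on $X^*$, so the upper-bound argument just proved applies verbatim to the tilted net and yields
\[
\limsup t_\alpha\log\widetilde{\nu}_\alpha(V^c)\le -\inf_{y\in V^c}\bigl(\overline{\mathbb{L}}^*(y)-\lambda(y)+\mathbb{L}(\lambda)\bigr).
\]
The exposing property states exactly that $y\mapsto\lambda(y)-\overline{\mathbb{L}}^*(y)$ attains its supremum strictly at $x$, so the right-hand infimum is strictly positive and $\widetilde{\nu}_\alpha(V^c)\to 0$ exponentially. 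For an arbitrary open $G$, the density hypothesis $\inf_G\overline{\mathbb{L}}^*=\inf_{G\cap\mathcal{E}}\overline{\mathbb{L}}^*$ lets one choose $x\in G\cap\mathcal{E}$ making $\overline{\mathbb{L}}^*(x)$ as close as wished to $\inf_G\overline{\mathbb{L}}^*$, and the bound at $x$ finishes the proof. The genuine obstacle is precisely this concentration step: converting the purely geometric strict separation provided by the exposing hyperplane into an exponential rate of concentration requires the quantitative integrability $\overline{\mathbb{L}}(c\lambda_x)<+\infty$, without which the tilted upper-bound machine cannot be bootstrapped.
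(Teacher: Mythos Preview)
The paper does not prove this theorem: it is stated as a classical result and attributed to Baldi and to Dembo--Zeitouni (the sentence preceding the statement reads ``The main tool to derive the large deviation lower bounds is the following classical result of Baldi (\cite{bal}, \cite{dem}; see also \cite{com3} for a strengthened version)''), and the text moves directly to \S\ref{general-results} afterward. So there is no ``paper's own proof'' to compare against.

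That said, your sketch is essentially the standard proof one finds in the cited references, and the overall architecture (Chebyshev plus compactness for the upper bound; exponential tilting plus the upper bound applied to the tilted net for the lower bound) is correct. Two small points are worth tightening. First, the sentence ``the exposed-point identity forces $\mathbb{L}(\lambda)-\lambda(x)=-\overline{\mathbb{L}}^*(x)$'' is not justified by the exposing property alone; what is immediate is the Fenchel--Young inequality $\mathbb{L}(\lambda)-\lambda(x)\ge-\overline{\mathbb{L}}^*(x)$, and that inequality is already enough for the lower bound you want. (The equality is in fact true, but it comes out \emph{a posteriori} from the concentration of the tilted net, not directly from the definition of exposed point.) Second, the claim that the tilted generating function is finite on all of $X^*$ is neither true in general nor needed: the upper-bound machinery only requires the Legendre transform of the tilted functional, which you correctly identify as $y\mapsto\overline{\mathbb{L}}^*(y)-\lambda(y)+\mathbb{L}(\lambda)$, together with exponential tightness of the tilted net---and the H\"older argument with the single exponent $c>1$ for which $\overline{\mathbb{L}}(c\lambda)<+\infty$ suffices for that tightness. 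With these two adjustments your argument is the textbook one.
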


\section{Hyperbolic rational maps and Baldi's  theorem}\label{general-results}

Throughout this section and the next one, $T$ is a hyperbolic  rational map of degree
$d\ge 2$ (\ie  expanding on its Julia set $J$), and $C(J)$ the set
of real-valued continuous functions on $J$ provided with the uniform topology.
 We denote by $\widetilde{\mathcal{M}}(J)$ the vector space of  signed Borel measures on
$J$ provided with the weak$^*$-topology, and by $\mathcal{M}(J)$
(resp. $\mathcal{M}(J,T)$)  the set of Borel probability measures on
$J$ (resp.
  $T$-invariant elements of $\mathcal{M}(J)$) endowed with the induced topology.
  For any $g\in C(J)$,   $P(T,g)$,
  $\mathcal{M}_g(J,T)$ and $h_\mu(T)$ stand for the topological pressure
of $g$, the set of equilibrium states for $g$, and   the
measure-theoretic entropy of $T$ with respect to $\mu$,
respectively; we recall that  $\mathcal{M}_g(J,T)$ has a unique
element when $g$ is H\"{o}lder continuous. We denote by $\widehat{g}$
the map defined on $\widetilde{\mathcal{M}}(J)$ by
$\widehat{g}(\mu)=\mu(g)$.
For each $f\in C(J)$ we define a map  $Q_f$   on $C(J)$ by
 \[Q_f(g)=P(T,f+g)-P(T,f),\]
and note that  $Q_f$ is real-valued, convex,  and continuous with respect to the uniform metric. The
  Legendre-Fenchel transform ${Q_f}^*$ of $Q_f$ will
 appear in the sequel as the rate function of our level-2 large
 deviation principles; note that  ${Q_f}^*$ vanishes exactly
 on $\mathcal{M}_f(J,T)$, as shows the following lemma.

\begin{lemma}\label{relation-l*-Hmu}
We have
\begin{displaymath}
{Q_f}^*(\mu)=\left\{
\begin{array}{ll}
P(T,f)-\mu(f)-h_{\mu}(T) &\ \  if\ \mu\in\mathcal{M}(J,T)
\\ \\
+\infty & \ \ if\
\mu\in\widetilde{\mathcal{M}}(J)\verb'\'\mathcal{M}(J,T).
\end{array}
\right.
\end{displaymath}
For each pair of functions  $f,g$ in $C(J)$ and each $\mu\in\mathcal{M}(J)$, $\mu$ is an equilibrium state for $f+g$ if and only if $Q_f(g)=\mu(g)-{Q_f}^*(\mu)$.
\end{lemma}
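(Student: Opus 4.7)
The plan is to reduce ${Q_f}^*$ to the Legendre-Fenchel transform $P^*$ of the pressure function $g \mapsto P(T,g)$ itself, and then invoke the classical identity $P^*(\mu) = -h_\mu(T)$ for invariant probability $\mu$ and $P^*(\mu) = +\infty$ otherwise. This identity is the dual statement of the variational principle and holds for any continuous self-map of a compact metric space whose entropy map is upper semi-continuous; for a hyperbolic rational map on its Julia set, expansivity provides this upper semi-continuity.

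First I would rewrite
$${Q_f}^*(\mu) = \sup_{g\in C(J)}\bigl[\mu(g) - P(T,f+g)\bigr] + P(T,f)$$
by the substitution $h = f+g$ and the linearity $\mu(h-f) = \mu(h) - \mu(f)$ (valid for any finite signed measure on the compact space $J$) as
$${Q_f}^*(\mu) = P(T,f) - \mu(f) + P^*(\mu),$$
which yields the asserted value on $\mathcal{M}(J,T)$. For the complementary case I would check three subcases directly. If $\mu(J) \neq 1$, taking $g \equiv c$ and using $Q_f(c) = c$ gives $\mu(g) - Q_f(g) = c(\mu(J) - 1) \to +\infty$ for a suitable sign of $c$. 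If $\mu$ is signed with nontrivial negative part, there is $h \geq 0$ in $C(J)$ with $\mu(h) < 0$, and monotonicity of the pressure yields $Q_f(-ch) \leq 0$ while $\mu(-ch) \to +\infty$ as $c \to +\infty$. Finally, if $\mu$ is a probability but not invariant, I would pick $k \in C(J)$ with $\mu(k \circ T - k) \neq 0$ and exploit the invariance of pressure under coboundaries, giving $Q_f(c(k \circ T - k)) = 0$ while $\mu(c(k \circ T - k)) \to +\infty$ for appropriate $c$.

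The second assertion is then an algebraic reformulation of the variational principle. If $\mu \in \mathcal{M}(J,T)$, substituting $h_\mu(T) = P(T,f) - \mu(f) - {Q_f}^*(\mu)$ rewrites the equilibrium condition $h_\mu(T) + \mu(f+g) = P(T, f+g)$ as $\mu(g) - {Q_f}^*(\mu) = Q_f(g)$, and the converse follows by noting that this equality forces ${Q_f}^*(\mu) < +\infty$ and hence $\mu \in \mathcal{M}(J,T)$ by the first part. The only non-trivial input throughout is the identity $P^*(\mu) = -h_\mu(T)$ for invariant $\mu$: the inequality $P^*(\mu) \leq -h_\mu(T)$ is immediate from the variational principle $P(T,h) \geq h_\mu(T) + \mu(h)$, and the main obstacle is the reverse inequality, which requires upper semi-continuity of $\mu \mapsto h_\mu(T)$ and in this setting is provided by expansivity of $T$ on $J$.
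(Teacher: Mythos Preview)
Your proof is correct and follows essentially the same strategy as the paper: both reduce the computation of $Q_f^*$ to the Legendre--Fenchel dual of the pressure and invoke the variational principle together with upper semi-continuity of the entropy map (here furnished by expansivity). The only organizational difference is in the treatment of $\mu\in\widetilde{\mathcal{M}}(J)\setminus\mathcal{M}(J,T)$: the paper handles this case abstractly by observing that the function $U(\mu)=-\mu(f)-h_\mu(T)$ (set to $+\infty$ off $\mathcal{M}(J,T)$) is convex and lower semi-continuous, so the Fenchel--Moreau biconjugate theorem gives $U=(P(T,f+\cdot))^*$ in one stroke; you instead verify $Q_f^*(\mu)=+\infty$ by hand via the three explicit test-function families (constants, nonnegative functions, coboundaries). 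Your route is slightly more elementary in that it avoids the biconjugate theorem for the complementary case, at the cost of the short case analysis; the paper's route is more uniform but requires checking lower semi-continuity of $U$ on all of $\widetilde{\mathcal{M}}(J)$. The second assertion is handled identically in both.
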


\proof
  Put
\begin{displaymath}
U(\mu)=\left\{
\begin{array}{ll}
-\mu(f)-h_{\mu}(T) & \textnormal{if}\ \mu\in\mathcal{M}(J,T)
\\ \\
+\infty & \textnormal{if}\
\mu\in\widetilde{\mathcal{M}}(J)\verb'\'\mathcal{M}(J,T).
\end{array}
\right.
\end{displaymath}
 We have
\[P(T,f+g)=\sup_{\mu\in\mathcal{M}(J,T)}\{\mu(f+g)+h_\mu(T)\}=
\sup_{\mu\in\widetilde{\mathcal{M}}(J)}\{\mu(g)-U(\mu)\},\] and
since the entropy map is affine upper semi-continuous, $U$ is convex
lower semi-continuous and $]-\infty,+\infty]$-valued.
 By the duality theorem, $U$ is the Legendre-Fenchel
transform of the map $g\rightarrow P(T,f+g)$, that is for each
$\mu\in\widetilde{\mathcal{M}}(J)$,
\[
\begin{split}
U(\mu) & =  \sup_{g\in C(J)}\{\mu(g)-P(T,f+g)\}=\sup_{g\in C(J)}\{\mu(g)-P(T,f)-Q_f(g)\}
\\ & =-P(T,f)+\sup_{g\in
C(J)}\{\mu(g)-Q_f(g)\}=  -P(T,f)+{Q_f}^*(\mu),
\end{split}
\]
 which proves the first assertion. The last assertion   follows from the equalities
\[
\begin{split}
 P(T,f+g)-h_\mu(T)-\mu(f+g) &
 =
Q_f(g) + P(T,f) -h_\mu(T) -\mu(f+g)
\\ &
 =
Q_f(g) + {Q_f}^* (\mu)-\mu(g).
\end{split}
\]
\endproof

 The following approximation property  is proved in \cite[Theorem 8]{lop}.

 \begin{theorem}\label{Lopes-theorem}\textnormal{(\textbf{Lopes})}
 For each $\mu\in\mathcal{M}(J,T)$ there exists a sequence
$(k_i)$ of H\"{o}lder continuous functions  on $J$ such that the
sequence $(\mu_i)$ of their  respective equilibrium states satisfies
 $\lim\mu_i=\mu$ and
 $\lim h_{\mu_i}(T)=h_\mu(T)$.
 \end{theorem}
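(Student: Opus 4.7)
The plan is to reduce the approximation problem to symbolic dynamics via a Markov partition of the Julia set, construct the approximating equilibrium states in the symbolic model where Hölder potentials and their Gibbs states are explicit, and then transport the construction back to $J$ via the coding map.

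First, I would invoke the construction of a finite Markov partition $\mathcal{R}=\{R_{1},\dots,R_{k}\}$ of $J$ (available since $T$ is expanding on its Julia set), producing a Hölder semi-conjugacy $\pi:(\Sigma_{A},\sigma)\to (J,T)$ from a one-sided topologically mixing subshift of finite type. The map $\pi$ is finite-to-one and a bijection off a set of zero measure for every invariant Borel probability measure, so entropy, pressure and equilibrium states correspond under $\pi_{*}$ and $\pi^{*}$. Given $\mu\in\mathcal{M}(J,T)$, I would lift it to a shift-invariant $\widetilde{\mu}$ on $\Sigma_{A}$ with $\pi_{*}\widetilde{\mu}=\mu$ and $h_{\widetilde{\mu}}(\sigma)=h_{\mu}(T)$, and reduce the theorem to producing a sequence $(\widetilde{\mu}_{i})$ on $\Sigma_{A}$ converging to $\widetilde{\mu}$ weakly and in entropy, each $\widetilde{\mu}_{i}$ being the unique equilibrium state of some Hölder potential.

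Second, on $(\Sigma_{A},\sigma)$ I would take $\widetilde{\mu}_{i}$ to be the shift-invariant Markov measure of memory $n_{i}$ whose transition probabilities on cylinders of length $n_{i}+1$ coincide with those of $\widetilde{\mu}$. Such a Markov measure is classically the unique Gibbs state of a locally constant potential $\widetilde{k}_{n_{i}}$ (the log of the prescribed conditional probabilities), which is trivially Hölder on $\Sigma_{A}$. Weak$^{*}$ convergence $\widetilde{\mu}_{i}\to\widetilde{\mu}$ is immediate since both measures agree on cylinders of length $\le n_{i}+1$. For the entropy, after reducing to the ergodic case via the ergodic decomposition, the Kolmogorov--Sinai entropy of $\widetilde{\mu}_{i}$ equals the conditional entropy $H_{\widetilde{\mu}}(\alpha_{0}\mid\alpha_{1}^{n_{i}})$, where $\alpha_{j}^{m}$ denotes the partition into cylinders indexed by coordinates $j,\dots,m$; by the martingale/Kolmogorov--Sinai convergence theorem this tends to $h_{\widetilde{\mu}}(\sigma)$ as $n_{i}\to\infty$, giving the matching lower bound on entropy.

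Third, I would push the approximants back to $J$ by setting $\mu_{i}:=\pi_{*}\widetilde{\mu}_{i}$, and take $k_{i}$ to be a Hölder potential on $J$ whose lift under $\pi$ is cohomologous to $\widetilde{k}_{n_{i}}$ (equivalently: exploiting that $\pi$ is Hölder and a bi-Hölder bijection off the branch locus, and using either direct extension from the Markov rectangles or the Sinai--Bowen--Ruelle lemma to replace $\widetilde{k}_{n_{i}}$ by a potential that descends through $\pi$). Weak$^{*}$ convergence $\mu_{i}\to\mu$ in $\mathcal{M}(J)$ and preservation of entropy under the coding then yield $h_{\mu_{i}}(T)\to h_{\mu}(T)$ and $\mu_{i}\to\mu$, with each $\mu_{i}$ the unique equilibrium state of the Hölder function $k_{i}$, completing the argument.

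The main obstacle is the entropy convergence, not the weak$^{*}$ one: since the entropy map is only upper semi-continuous, weak$^{*}$ convergence forces $\limsup h_{\mu_{i}}(T)\le h_{\mu}(T)$, but the matching lower bound must come from the explicit structure of the approximation. The choice of Markov approximations of memory $n_{i}$ is dictated precisely by the need to realize $h_{\widetilde{\mu}_{i}}(\sigma)$ as a conditional entropy of $\widetilde{\mu}$ that increases to $h_{\widetilde{\mu}}(\sigma)$; the delicate point is handling invariant measures that are not ergodic and, on the rational-map side, ensuring that the symbolic Hölder potential descends (up to a coboundary that does not affect the equilibrium state) to a Hölder function on $J$ despite the identifications imposed by the coding map on the boundaries of the Markov rectangles.
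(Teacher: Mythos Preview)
The paper does not prove this statement; it merely attributes it to Lopes (\cite{lop}, Theorem~8) and uses it as a black box. So there is no in-paper proof to compare against. Your overall strategy---code $J$ by a mixing SFT via a Markov partition, approximate the lifted measure by its $n$-step Markov approximations (which are unique Gibbs states of locally constant, hence H\"older, potentials), and push back---is the classical route to this kind of entropy-density result and is essentially what one expects Lopes's argument to look like. The entropy step is fine: $h_{\widetilde{\mu}_i}(\sigma)=H_{\widetilde{\mu}}(\alpha_0\mid\alpha_1^{n_i})$ \emph{decreases} (not increases) to $h_{\widetilde{\mu}}(\sigma)$, which already gives $h_{\widetilde{\mu}_i}\ge h_{\widetilde{\mu}}$ and hence the needed lower bound; upper semi-continuity does the rest. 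The reduction to ergodic $\widetilde\mu$ is unnecessary here, since the Kolmogorov--Sinai formula holds for arbitrary invariant measures.

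There are, however, two genuine gaps you must close. First, if $\widetilde{\mu}$ assigns zero mass to some admissible $(n_i{+}1)$-cylinder, the log-transition potential $\widetilde{k}_{n_i}$ takes the value $-\infty$ and is not continuous, so $\widetilde{\mu}_i$ is not the Gibbs state of a H\"older potential as written. The standard fix is to first convexly perturb $\widetilde{\mu}$ towards a fully supported invariant measure (e.g.\ the Parry measure), using affinity of entropy to keep entropy close, and only then take Markov approximations. Second, and more seriously, your third step is only a wish: a locally constant potential on $\Sigma_A$ generally does \emph{not} descend through $\pi$ to a continuous function on $J$, because distinct cylinders are glued along the boundaries of the Markov rectangles, and there is no reason for it to be cohomologous to one that does. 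You need an actual argument---for instance, that the pushforward $\pi_*\widetilde{\mu}_i$ inherits the Gibbs property on $J$ with a H\"older Jacobian for the transfer operator (using that $\pi$ is a bounded-to-one H\"older semi-conjugacy and a bijection off a null set), and then invoke the Bowen--Ruelle characterisation of equilibrium states for expanding maps to exhibit a H\"older $k_i$ on $J$ with $\mu_i=\pi_*\widetilde{\mu}_i$ as its unique equilibrium state. As written, this step is asserted rather than proved.
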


The main result of this section is the following theorem, where  the  large deviation principle is obtained by a direct application of Baldi's theorem, once observed that the condition  on exposed points follows  from  Theorem \ref{Lopes-theorem}.

\begin{theorem}\label{LDP}
Let $f\in C(J)$,
let $(\nu_\alpha)$ be a net of Borel probability measures on
$\mathcal{M}(J)$, let $(t_\alpha)$ be a net in $]0,+\infty[$ converging to $0$,  and assume that
\[\lim t_\alpha\log\nu_\alpha(e^{\widehat{g}/t_\alpha})=Q_f(g)
\]for all $g$ in a dense subset of $C(J)$.
\begin{nitemize}
\item[\textnormal{a)}] The net $(\nu_\alpha)$ satisfies a large deviation
principle with power $(t_\alpha)$ and  rate function
\begin{displaymath}
I^f(\mu)=\left\{
\begin{array}{ll}
P(T,f)-\mu(f)-h_{\mu}(T) & \ \ if\ \mu\in\mathcal{M}(J,T)
\\ \\
+\infty & \ \ if\
\mu\in{\mathcal{M}}(J)\verb'\'\mathcal{M}(J,T).
\end{array}
\right.
\end{displaymath}
 Moreover,
  for each convex
 open  set $G\subset\mathcal{M}(J)$ containing some invariant measure  we have
\begin{equation}\label{LDP-eq-2}
\lim t_\alpha\log\nu_\alpha(G) =\lim t_\alpha\log\nu_\alpha(\overline{G})=
-\inf_{\mu\in
\overline{G}}I^f(\mu)= -\inf_{\mu\in G\cap\cE'}I^f(\mu),
\end{equation}
where $\cE'$ is the set of  equilibrium states of all H\"{o}lder continuous functions on $J$. In particular,
  for each $\mu\in\mathcal{M}(J,T)$ and each  convex
local basis  $\mathcal{G}_\mu$   at $\mu$ we obtain
\[h_{\mu}(T)=P(T,f)-\mu(f)+\inf\{\lim t_\alpha\log\nu_\alpha(G):
G\in\mathcal{G}_\mu\}.\]
\item[\textnormal{b)}]  Each  limit point of $(\nu_\alpha)$ has its support included in $\mathcal{M}_{f}(J,T)$; in particular,
$\lim\nu_\alpha=\delta_{\mu_f}$ when $f$ has a unique equilibrium sate $\mu_f$.
\end{nitemize}
\end{theorem}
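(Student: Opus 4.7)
The plan is to apply Baldi's theorem (Theorem \ref{Baldi-theorem}) on $X=\widetilde{\mathcal{M}}(J)$, whose topological dual is identified with $C(J)$ via $g\mapsto\widehat{g}$. Exponential tightness is automatic since every $\nu_\alpha$ is supported on the compact set $\mathcal{M}(J)$. First, using continuity of $\overline{\mathbb{L}}$ in the uniform topology together with continuity of $Q_f$ on $C(J)$, the hypothesis extends from the dense subset to yield $\mathbb{L}(\widehat{g})=Q_f(g)$ for every $g\in C(J)$. Hence the generalized log-moment generating function of $(\nu_\alpha)$ is exactly the map $\widehat{g}\mapsto Q_f(g)$, and by Lemma \ref{relation-l*-Hmu} its Legendre-Fenchel transform $\overline{\mathbb{L}}^*={Q_f}^*$ coincides with $I^f$ (extended by $+\infty$ off $\mathcal{M}(J,T)$). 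Everything now reduces to checking the exposed-point hypothesis of Baldi's theorem.

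The key observation is the second assertion of Lemma \ref{relation-l*-Hmu}: $\mu$ is an equilibrium state for $f+g$ iff $Q_f(g)=\mu(g)-I^f(\mu)$, so $\mu$ is an exposed point of $I^f$ with exposing hyperplane $\widehat{g}$ precisely when $\mu$ is the \emph{unique} equilibrium state for $f+g$. Since $f+g$ admits a unique equilibrium state whenever $f+g$ is H\"older continuous, and we then also have $\mathbb{L}(\widehat{g})=Q_f(g)$ and $\overline{\mathbb{L}}(c\widehat{g})=Q_f(cg)<+\infty$ for every $c>1$, the set $\mathcal{E}'$ of equilibrium states of H\"older continuous functions is contained in the set $\mathcal{E}$ of Baldi's theorem. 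To verify $\inf_G I^f=\inf_{G\cap\mathcal{E}'}I^f$ for any open $G\subset\widetilde{\mathcal{M}}(J)$, I would pick $\mu\in G$ with $I^f(\mu)<+\infty$ (so that $\mu\in\mathcal{M}(J,T)$), apply Lopes' Theorem \ref{Lopes-theorem} to produce $\mu_i\in\mathcal{E}'$ with $\mu_i\to\mu$ and $h_{\mu_i}(T)\to h_\mu(T)$, observe that eventually $\mu_i\in G$ and $I^f(\mu_i)\to I^f(\mu)$, and conclude. Baldi's theorem then yields the large deviation principle with rate $I^f$.

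For the convex-open-set statement in a), I would exploit that $I^f$ is affine on $\mathcal{M}(J,T)$. Given convex open $G$ with $\mu_0\in G\cap\mathcal{M}(J,T)$ (so $I^f(\mu_0)<+\infty$ since $h_\cdot(T)$ is bounded by the topological entropy) and any $\nu\in\overline{G}$ with $I^f(\nu)<+\infty$, the segment $\mu_t=(1-t)\nu+t\mu_0$ lies in $G$ for all $t\in(0,1]$ and satisfies $I^f(\mu_t)=(1-t)I^f(\nu)+t\,I^f(\mu_0)$, so letting $t\to 0^+$ gives $\inf_G I^f\le\inf_{\overline{G}}I^f$, the reverse inequality being trivial. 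Thus $\overline{G}$ is an $I^f$-continuity set and (\ref{LDP-eq-2}) follows from (\ref{preliminaries-eq4}), combined with the $\mathcal{E}'$-approximation of the previous paragraph applied to $G$. The entropy formula is then immediate from letting $G$ range over the convex local basis $\mathcal{G}_\mu$ and invoking lower semi-continuity of $I^f$ at $\mu$.

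Finally, part b) is standard once one notes that the zero set of $I^f$ equals $\mathcal{M}_f(J,T)$, which is immediate from Lemma \ref{relation-l*-Hmu}: compactness of $\mathcal{M}(J)$ and lower semi-continuity of $I^f$ force any limit point $\nu$ of $(\nu_\alpha)$ to charge only $\{I^f=0\}=\mathcal{M}_f(J,T)$, and when this set reduces to $\{\mu_f\}$ the only possibility is $\nu=\delta_{\mu_f}$. The main obstacle, and the real substance of the argument, lies in the identification of exposed points with unique equilibrium states via Lemma \ref{relation-l*-Hmu} and the use of Lopes' approximation to meet Baldi's density hypothesis; everything else is a direct extraction of consequences.
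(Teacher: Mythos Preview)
Your proposal is correct and follows essentially the same route as the paper: apply Baldi's theorem on $\widetilde{\mathcal{M}}(J)$, extend the functional identity to all of $C(J)$ by continuity, identify exposed points with unique equilibrium states via Lemma \ref{relation-l*-Hmu}, invoke Lopes' approximation (Theorem \ref{Lopes-theorem}) to verify the density hypothesis, and then deduce the convex-open-set statement from the affineness of $I^f$ via the same segment argument. The paper's treatment of part b) is slightly more explicit (it applies the upper bound to a closed neighborhood disjoint from $\mathcal{M}_f(J,T)$ to force $\nu(G)=0$), but your sketch captures the same idea.
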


\proof
a)
We consider  $(\nu_\alpha)$ as a net of Borel
probability measures on $\widetilde{\mathcal{M}}(J)$, and note that
the corresponding large deviation functional (with slight abuse of
notation we denoted also by $\overline{\mathbb{L}}$) satisfies
$\overline{\mathbb{L}}(\widehat{\cdot})=\overline{\mathbb{L}}(\widehat{\cdot}_{\mid \mathcal{M}(J)})$.
Since $\sup_{J}|g|=\sup_{\cM(J)}|\widehat{g}|$,  the maps  $g\mapsto\overline{\mathbb{L}}(\widehat{g})-Q_f(g)$ and $g\mapsto\underline{\mathbb{L}}(\widehat{g})-Q_f(g)$ are  continuous (where $\underline{\mathbb{L}}$ is defined replacing the upper limit by a lower limit in (\ref{preliminaries-eq5})),
and so
the general  hypothesis implies the existence of ${\mathbb{L}}(\widehat{g})$ with the equality
  ${\mathbb{L}}(\widehat{g})=Q_f(g)$ for all $g\in C(J)$.
  Consequently, we have
  \begin{equation}\label{LDP-eq-6}
  {{\mathbb{L}}}^*={Q_f}^*,
  \end{equation} and  the large deviation
upper-bounds in  $\widetilde{\mathcal{M}}(J)$ with the function
${{\mathbb{L}}}^*$  follow from a well-known result in topological
vector spaces, namely, Theorem 4.5.3 of \cite{dem} (\cf Remark \ref{remark-upper-bounds}). Let $\mathcal{E}$ denote the set of exposed points of ${\mathbb{L}}^*$, and note that $\mathcal{E}$  coincides with the set denoted by the same symbol in Theorem \ref{Baldi} since $\mathbb{L}_{\mid \widetilde{\mathcal{M}}(J)^*}$ is here real-valued.
For every  $\mu\in\mathcal{M}(J)$,  (\ref{LDP-eq-6}) and the last assertion of Lemma \ref{relation-l*-Hmu} show  that  $\mu$  is the unique equilibrium state
for $f+g$ if and only if  $\mu\in\mathcal{E}$ with exposing hyperplane $\widehat{g}$.
Let $\cE'$ denote the subset of $\cE$ constituted by the equilibrium states of all H\"{o}lder continuous functions.
Putting
$k_i=f+g_i$ in  Theorem \ref{Lopes-theorem}, it follows from the expression  of ${Q_f}^*$ given by
 Lemma \ref{relation-l*-Hmu}  that for each
$\mu\in\mathcal{M}(J,T)$, there exists a sequence $(f+g_i)$ in $C(J)$ and a sequence   $(\mu_i)$ in $\mathcal{M}(J,T)$ such that
\begin{itemize}
\item[(i)] $\mu_i\in\mathcal{E}'$ (with exposing  hyperplane $\widehat{g_i}$, or equivalently $\mu_i$ is the
unique equilibrium state for $f+g_i$);
\item[(ii)] $\lim\mu_i=\mu$;
\item[(iii)] $\lim {{\mathbb{L}}}^*(\mu_i)={{\mathbb{L}}}^*(\mu)$.
\end{itemize}
Since $\cE'\subset\mathcal{E}\subset\mathcal{M}(J,T)$ and ${\mathbb{L}}^*$ is infinite-valued   outside  $\mathcal{M}(J,T)$ we have for each open set $G\subset\mathcal{M}(J)$,
\begin{equation}\label{LDP-eq-8}
\inf_{G}{\mathbb{L}}^*=\inf_{G\cap\mathcal{M}(J,T)}{\mathbb{L}}^*
\le\inf_{G\cap\mathcal{E}}{\mathbb{L}}^*
\le\inf_{G\cap\mathcal{E}'}{\mathbb{L}}^*,
\end{equation}
and the  properties (i)-(iii) show that both above inequalities are equalities.
 Consequently, all the hypotheses of Theorem \ref{Baldi} are fulfilled, and so
  $(\nu_\alpha)$ satisfies a large deviation principle in
$\widetilde{\mathcal{M}}(J)$ with powers $(t_\alpha)$ and rate  function ${{\mathbb{L}}}^*$.
Since $\mathcal{M}(J)$  is closed in $\widetilde{\mathcal{M}}(J)$,
the large deviation principle holds  in
$\mathcal{M}(J)$ with rate function ${{{\mathbb{L}}}^*}$$_{\mid
\mathcal{M}(J)}$ (\cite{dem}, Lemma 4.1.5); this proves the first
assertion since $I^f={{{\mathbb{L}}}^*}$$_{\mid
\mathcal{M}(J)}$ by (\ref{LDP-eq-6}) and Lemma \ref{relation-l*-Hmu}.
 Let
$G\subset\mathcal{M}(J)$ be  a convex open set containing some
invariant measure $\mu'$.  Let $\mu\in \overline{G}$ satisfying
$I^f(\mu)=\inf_{\overline{G}}I^f$, and suppose that
$I^f(\mu)<\inf_{G}I^f$; in particular
$\mu\in\mathcal{M}(J,T)\cap\overline{G}\verb'\'G$. Let $\lambda_n$
be a sequence in $]0,1[$ converging to $0$, put
$\mu_n=\lambda_n\mu'+(1-\lambda_n)\mu$, and note that $\mu_n\in G$
(\cite{sch}, pp. 38). We have $\lim\mu_n=\mu$ and $\lim
I^f(\mu_n)=I^f(\mu)$ since $I^f$ is affine and real-valued on
$\mathcal{M}(J,T)$, which gives the contradiction; therefore $\inf_G
I^f=\inf_{\overline{G}}I^f$, and  since (\ref{LDP-eq-8}) holds with equalities
  we get
\[\inf_{\overline{G}}I^f=\inf_{G\cap\cE'}I^f.\] The two last
assertions  are direct consequences of the above  equality and  the
large deviation principle (\cf (\ref{preliminaries-eq4}),  (\ref{preliminaries-eq2})).

b)
Let $(\nu_\beta)$ be a subnet of $(\nu_\alpha)$ converging to some $\nu$, let $\mu\in\mathcal{M}(J,T)\verb'\'\mathcal{M}_f(J,T)$, and let
 $G$ be an open set satisfying
 \[\mu\in G\subset\overline{G}\subset\mathcal{M}(J,T)\verb'\'\mathcal{M}_f(J,T).\]
 The large deviation upper bounds yields
 \[\limsup t_\alpha\log\nu_\alpha(G)\le\limsup t_\alpha\log\nu_\alpha(\overline{G})<0,\]hence \[\lim\nu_\beta(G)=\nu(G)=0,\] which shows that $\textnormal{supp}(\nu) \subset\mathcal{M}_f(J,T)$.
\endproof

\begin{corollary}\label{LDP-level-1}
Under the hypotheses of Theorem \ref{LDP}, for each $k\in C(J)$ the
 net of image measures $(\widehat{k}[\nu_\alpha])$ satisfies a
large deviation principle in $\mathbb{R}$ with powers $(t_\alpha)$ and convex rate function
\[\forall x\in\mathbb{R},\ \ \ \ \ \ I^f_k(x)=\inf_{\{\mu\in\mathcal{M}(J):\mu(k)=x\}}
 I^f(\mu).\] Explicitly, we have
 \[\limsup t_\alpha\log\nu_\alpha\{\mu\in\mathcal{M}(J):\mu(k)\in C\}\le -\inf_{x\in C}I^f_k(x)
\] for all closed sets  $C\subset\mathbb{R}$,
  and
\[\liminf t_\alpha\log\nu_\alpha\{\mu\in\mathcal{M}(J):\mu(k)\in U\}\ge -\inf_{x\in U}I^f_k(x)\]
 for all open sets $U\subset\mathbb{R}$.
 Moreover, for each $\mu\in\mathcal{M}(J,T)$
and  each $\varepsilon>0$ small enough, we have
\begin{equation}\label{LDP-level-1-eq0}
\lim t_\alpha\log\nu_\alpha(G_{k,\mu,\varepsilon})=\lim t_\alpha\log\nu_\alpha
(\overline{G_{k,\mu,\varepsilon}})=-\inf_{G_{k,\mu,\varepsilon}}
I^f=-\inf_{\overline{G_{k,\mu,\varepsilon}}}I^f,
\end{equation}
where
  $G_{k,\mu,\varepsilon}=\{\mu'\in\mathcal{M}(J):
|\mu'(k)-\mu(k)|>\varepsilon\}$.
\end{corollary}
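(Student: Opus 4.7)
The plan is to deduce the LDP on $\mathbb{R}$ from Theorem \ref{LDP} via the contraction principle, and to obtain (\ref{LDP-level-1-eq0}) by decomposing the non-convex set $G_{k,\mu,\varepsilon}$ into two convex open half-spaces and invoking the convex-open-set assertion of Theorem \ref{LDP}(a). For the first part, the evaluation map $\widehat{k}\colon\mathcal{M}(J)\to\mathbb{R}$ is continuous by the very definition of the weak$^*$ topology, and $I^f$ is automatically a good rate function on the compact space $\mathcal{M}(J)$. The contraction principle (\cite{dem}, Theorem 4.2.1; see also \cite{com-TAMS-03}) then yields an LDP for $(\widehat{k}[\nu_\alpha])$ with powers $(t_\alpha)$ and rate function $I^f_k(x)=\inf\{I^f(\mu):\mu(k)=x\}$; the displayed upper and lower bounds are immediate rewrites via $\nu_\alpha(\{\mu:\mu(k)\in B\})=\widehat{k}[\nu_\alpha](B)$. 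Convexity of $I^f_k$ follows from the affineness of $\widehat{k}$ combined with the affineness of $I^f$ on $\mathcal{M}(J,T)$ (and $I^f=+\infty$ elsewhere): if $I^f_k(x),I^f_k(y)<\infty$, convex combinations of approximately optimal invariant measures remain invariant and evaluate by $\widehat{k}$ to $\lambda x+(1-\lambda)y$, yielding $I^f_k(\lambda x+(1-\lambda)y)\le\lambda I^f_k(x)+(1-\lambda)I^f_k(y)$.

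For (\ref{LDP-level-1-eq0}) I would write $G_{k,\mu,\varepsilon}=G_+\cup G_-$ with the convex open half-spaces $G_\pm=\{\mu'\in\mathcal{M}(J):\pm(\mu'(k)-\mu(k))>\varepsilon\}$. The key reduction is the purely variational equality
\[
\inf_{G_{k,\mu,\varepsilon}}I^f=\inf_{\overline{G_{k,\mu,\varepsilon}}}I^f;
\]
granted this, the LDP lower bound on the open set $G_{k,\mu,\varepsilon}$ and the upper bound on the closed set $\overline{G_{k,\mu,\varepsilon}}$ squeeze all four quantities in (\ref{LDP-level-1-eq0}) together. Since closure commutes with finite unions, it suffices to prove $\inf_{G_\pm}I^f=\inf_{\overline{G_\pm}}I^f$ individually. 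If $G_\pm$ contains an invariant measure, this is exactly a case of (\ref{LDP-eq-2}) applied to the convex open set $G_\pm$ (the only point used there is affineness of $I^f$ together with convexity of the set); otherwise both infima equal $+\infty$, provided that $\overline{G_\pm}$ also carries no invariant measure.

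The only genuine obstacle is interpreting the clause "$\varepsilon$ small enough" so as to exclude the degenerate case where $G_\pm\cap\mathcal{M}(J,T)=\emptyset$ while $\overline{G_\pm}\cap\mathcal{M}(J,T)\ne\emptyset$. This can occur only if $\mu(k)\pm\varepsilon$ equals the maximum (resp.\ minimum) of $\widehat{k}$ on $\mathcal{M}(J,T)$ and that extremum is attained, which excludes at most one value of $\varepsilon$ per side. Accordingly, any $\varepsilon>0$ strictly smaller than $\sup_{\mathcal{M}(J,T)}\widehat{k}-\mu(k)$ and $\mu(k)-\inf_{\mathcal{M}(J,T)}\widehat{k}$ works, with the convention that a non-positive bound imposes no constraint (in which case the corresponding side contributes $+\infty$ trivially for every $\varepsilon>0$). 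With this $\varepsilon$, the two cases above cover every half-space and (\ref{LDP-level-1-eq0}) follows.
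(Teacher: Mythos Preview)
Your argument is correct and follows the same route as the paper: contraction principle for the LDP, affineness of $I^f$ for convexity of $I^f_k$, and the decomposition $G_{k,\mu,\varepsilon}=G_+\cup G_-$ into convex half-spaces together with (\ref{LDP-eq-2}) for the last assertion. The only organizational difference concerns the ``$\varepsilon$ small enough'' clause: the paper handles the borderline situation (where $G_\pm$ misses $\mathcal{M}(J,T)$ but $\overline{G_\pm}$ does not) via the monotone nesting $G_{j,\delta'}\supset\overline{G_{j,\delta}}$ for $\delta'<\delta$ and a layered case analysis, whereas you identify the threshold directly as $\min\{\sup_{\mathcal{M}(J,T)}\widehat{k}-\mu(k),\ \mu(k)-\inf_{\mathcal{M}(J,T)}\widehat{k}\}$ (with your convention when a term vanishes). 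Both lead to the same conclusion; your formulation is arguably cleaner and makes the meaning of ``small enough'' explicit.
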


\proof
 The large deviation principle with rate function $I^f_k$ follows from the contraction principle (\cite{dem},
Theorem 4.2.1) applied  to  $(\nu_\alpha)$ with  $\widehat{k}$.
 For each pair of reals $x_1,x_2$
 and  each $\beta\in]0,1[$ we have
\[I^f_k(\beta x_1+(1-\beta)x_2)=\inf\{I^f(\mu):
\mu\in{\mathcal{M}}(J,T),\mu(k)=\beta
x_1+(1-\beta)x_2\}\]
\[\le\inf\{I^f(\beta\mu_1+(1-\beta)\mu_2):\mu_1\in\mathcal{M}(J,T),
\mu_2\in\mathcal{M}(J,T),\mu_1(k)=x_1,\mu_2(k)=x_2\}\]
\[=\inf\{\beta I^f(\mu_1)+(1-\beta)I^f(\mu_2):\mu_1\in\mathcal{M}(J,T),
\mu_2\in\mathcal{M}(J,T),\mu_1(k)=x_1,\mu_2(k)=x_2\}\]
\[\le\beta I^f_k(x_1)+(1-\beta)I^f_k(x_2)\]
 and hence $I^f_k$ is convex.
For each $\mu\in\mathcal{M}(J,T)$ and each $\delta>0$ we put
\[ G_{1,\delta}
=
\{ \mu' \in \mathcal{M}(J) :  \mu'(k)-\mu(k) > \delta \}
\]
and
\[
G_{2,\delta}
=
\{ \mu' \in \mathcal{M}(J) : \mu'(k)-\mu(k) < -\delta \},
\]
and note that
\begin{equation}\label{LDP-level-1-eq3}
\forall \delta'<\delta,\ \ \ \ {G_{j, \delta'}} \supset \overline{G_{j, \delta}},\ \ \ \ \ \ \ \ (j \in \{1, 2 \}).
\end{equation}
First assume that   $(G_{1,\delta}\cup G_{2,\delta})\cap\mathcal{M}(J,T)\neq\emptyset$  for  some $\delta>0$. If $G_{1,\delta}\cap G_{2,\delta}$ contains some invariant measure then   (\ref{LDP-level-1-eq0}) with $\varepsilon=\delta$ follows  from  (\ref{LDP-eq-2}) applied to $G_{1,\delta}\cup G_{2,\delta}$. Clearly the same  holds for all $\varepsilon \in ]0, \delta]$ by (\ref{LDP-level-1-eq3}).
If $G_{j,\delta}\cap\mathcal{M}(J,T)=\emptyset$  for some $j \in \{1, 2 \}$ (say $j=1$), then  either  $G_{1,\delta'}\cap\mathcal{M}(J,T)\neq\emptyset$
 for all  $\delta'<\delta$,  and we fall in the  preceding  case with $\delta'$ in place of $\delta$;
 either  $\overline{G_{1,\delta}}\cap\mathcal{M}(J,T)=\emptyset$  and  the  large deviation upper bounds  yields  $\limsup t_\alpha\log\nu_\alpha(\overline{G_{1,\delta}})=-\infty$, and  (\ref{LDP-level-1-eq0}) follows from (\ref{LDP-eq-2}) applied to $G_{2,\delta}$.
When
$(G_{1,\delta}\cup G_{2,\delta})\cap\mathcal{M}(J,T)=\emptyset$  for all $\delta > 0$, from (\ref{LDP-level-1-eq3})
 we obtain
\[\forall\delta>0,\ \ \ \ \overline{G_{1,\delta}\cup G_{2,\delta}}\cap\mathcal{M}(J,T)=\emptyset, \]
 and (\ref{LDP-level-1-eq0}) follows from  the   upper bounds applied to $\overline{G_{1,\delta}\cup G_{2,\delta}}$ for all $\delta>0$ (note that in this case, (\ref{LDP-level-1-eq0}) takes an  infinite value, and  $\widehat{k}_{\mid \mathcal{M}(J,T)}$ is the constant function equals to $\mu(k)$).
\endproof

For each real $t$, we put  $k_t=-t\log|T'|$  (\ie
 $\widehat{{k_{-1}}}_{\mid\mathcal{M}(J,T)}=\chi$ is the Lyapunov
 map), and we write $P(t)$ for
$P(T,k_t)$. The map $t\mapsto P(t)$ is real analytic, strictly decreasing, and strictly
convex  when $T$ is not conjugate to
$z\mapsto z^{\pm d}$ (in this last case $P(t)=\log d(1-t)$). The map
$k_t$  has a unique equilibrium state, which we
  denote in what follows simply by $\mu_t$, in place of $\mu_{k_t}$ (\cite{Rue-82-ETDS2}, \cite{Keller98}).
   We put $\chi_{\inf}=\inf\{\chi(\mu):\mu\in\mathcal{M}(J,T)\}$,
   $\chi_{\sup}=\sup\{\chi(\mu):\mu\in\mathcal{M}(J,T)\}$,
   and recall that $\chi_{\inf}=\inf\{-P'(t):t\in\mathbb{R}\}>0$
   and $\chi_{\sup}=\sup\{-P'(t):t\in\mathbb{R}\}$.
In the following,  we specify  Corollary \ref{LDP-level-1} by taking
$f=k_t$ and $k=k_{-1}$.

\begin{theorem}\label{level-1-Lya-theo}
Assume that  the hypotheses of Theorem \ref{LDP} hold with $f=k_t$
for some real $t$. Then the net $(\widehat{k_{-1}}[\nu_\alpha])$
satisfies a large deviation principle with  rate function
\begin{equation}\label{level-1-Lya-theo-eq3}
I^{k_t}_{k_{-1}}(x)=\left\{
\begin{array}{ll}
0 & \ \ if\ x=\log d
\\ \\
+\infty & \ \ if\ x\neq \log d
\end{array}
\right.
\end{equation}
when  $T$ is  conjugate to $z\mapsto z^{\pm d}$, and
\begin{equation}\label{level-1-Lya-theo-eq4}
I^{k_t}_{k_{-1}}(x)=\left\{
\begin{array}{ll}
P(t)+tx-h_{\mu_{s_x}}(T) & if\ \chi_{\inf}< x <\chi_{\sup}
\\ \\
+\infty & otherwise
\end{array}
\right.
\end{equation}in all others cases,
where $s_x$ is the unique   real such that $\chi(\mu_{s_x})=x$;
${I^{k_t}_{k_{-1}}}$ is then strictly convex  on
$]\chi_{\inf},\chi_{\sup}[$, and   essentially smooth.
  Moreover, we have
\[
\lim\nu_\alpha(\widehat{k_{-1}})=\chi(\mu_t).
\]
\end{theorem}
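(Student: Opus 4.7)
The plan is to apply Corollary \ref{LDP-level-1} with $f=k_t$ and $k=k_{-1}$, and then identify the resulting rate function $I^{k_t}_{k_{-1}}$ via a Legendre-type computation using the variational principle.

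First I would rewrite the infimum defining the rate function. Since $k_t = -t\,k_{-1}$ we have $\mu(k_t)=-t\mu(k_{-1})=-t\chi(\mu)$ for every $\mu\in\cM(J,T)$, so Theorem \ref{LDP} gives, for $\mu\in\cM(J,T)$,
\[
I^{k_t}(\mu)=P(t)+t\chi(\mu)-h_\mu(T),
\]
and $+\infty$ otherwise. Consequently,
\[
I^{k_t}_{k_{-1}}(x)=P(t)+tx-\sup\bigl\{h_\mu(T):\mu\in\cM(J,T),\ \chi(\mu)=x\bigr\},
\]
with the convention that the supremum over the empty set is $-\infty$. This immediately gives $I^{k_t}_{k_{-1}}(x)=+\infty$ whenever $x\notin[\chi_{\inf},\chi_{\sup}]$.

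Next I would use the variational principle and the existence/uniqueness of $\mu_{s_x}$. Since $-P'(s)=\chi(\mu_s)$ is continuous and (in the non-conjugate case) strictly monotone with range $(\chi_{\inf},\chi_{\sup})$, for every such $x$ there is a unique $s_x$ with $\chi(\mu_{s_x})=x$. For any invariant $\mu$ with $\chi(\mu)=x$, the variational principle yields
\[
h_\mu(T)=h_\mu(T)-s_x\chi(\mu)+s_x x\le P(s_x)+s_x x=h_{\mu_{s_x}}(T),
\]
with equality at $\mu=\mu_{s_x}$; so the supremum is attained and equal to $h_{\mu_{s_x}}(T)$, giving (\ref{level-1-Lya-theo-eq4}). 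In the exceptional case $T\sim z\mapsto z^{\pm d}$, a direct computation shows $\chi(\mu)=\log d$ for every $\mu\in\cM(J,T)$, so $\chi_{\inf}=\chi_{\sup}=\log d$; combined with $P(t)=(1-t)\log d$ and $h_{\text{top}}(T)=\log d$, the infimum in the definition evaluates to $0$ at $x=\log d$, yielding (\ref{level-1-Lya-theo-eq3}).

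For strict convexity and essential smoothness in the non-conjugate case, I would write $I^{k_t}_{k_{-1}}(x)=P(t)-P(s_x)+(t-s_x)x$ and differentiate. Since $-P'(s_x)=x$ implies $s_x'(x)=-1/P''(s_x)$, two applications of the chain rule give
\[
\frac{d}{dx}I^{k_t}_{k_{-1}}(x)=t-s_x,\qquad \frac{d^2}{dx^2}I^{k_t}_{k_{-1}}(x)=\frac{1}{P''(s_x)}>0,
\]
so strict convexity follows from strict convexity of $P$. For essential smoothness, as $x\to\chi_{\inf}^+$ we have $s_x\to+\infty$ (since $-P'$ is strictly decreasing with limit $\chi_{\inf}$ at $+\infty$), so $(I^{k_t}_{k_{-1}})'(x)\to-\infty$; symmetrically at $\chi_{\sup}^-$ we have $s_x\to-\infty$ and the derivative tends to $+\infty$. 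Finally, the limit $\lim\nu_\alpha(\widehat{k_{-1}})=\chi(\mu_t)$ is immediate from Theorem \ref{LDP}(b): the H\"older function $k_t$ has unique equilibrium state $\mu_t$, so $\nu_\alpha\to\delta_{\mu_t}$ weakly$^*$, and $\widehat{k_{-1}}$ being bounded continuous on $\cM(J)$ yields $\nu_\alpha(\widehat{k_{-1}})\to \widehat{k_{-1}}(\mu_t)=\chi(\mu_t)$.

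The only genuine obstacle is the boundary analysis: one must rule out pathological behavior at $x=\chi_{\inf}$ and $x=\chi_{\sup}$ (where $s_x$ may fail to exist) and confirm the correct limiting behavior of $-P'$. This is what distinguishes the $z\mapsto z^{\pm d}$ case, where the Lyapunov spectrum degenerates, from the generic hyperbolic setting, and where the paper's hypothesis that $T$ not be conjugate to $z\mapsto z^{\pm d}$ plays its decisive role.
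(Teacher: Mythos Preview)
Your argument is correct and the identification of the rate function via the contraction formula of Corollary~\ref{LDP-level-1} together with the variational inequality $h_\mu(T)\le h_{\mu_{s_x}}(T)$ for $\chi(\mu)=x$ is exactly what the paper does (cf.\ (\ref{level-1-Lya-theo-eq6})--(\ref{level-1-Lya-theo-eq6.1})). The treatment of the last assertion via Theorem~\ref{LDP}~b) is also identical.

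The one genuine difference lies in how strict convexity and essential smoothness are obtained. The paper first computes the moment generating functional $L(s)=P(t-s)-P(t)$ by Varadhan's theorem, invokes G\"artner--Ellis to recognize $I^{k_t}_{k_{-1}}=L^*$, and then appeals to a single convex-duality result (Rockafellar, Corollary~26.3.1): since $L$ is differentiable and strictly convex, every subgradient of $L^*$ is unique, which forces $L^*$ to be strictly convex on the interior of its domain and essentially smooth. You instead differentiate the explicit formula $I^{k_t}_{k_{-1}}(x)=P(t)-P(s_x)+(t-s_x)x$ by hand. Your route is more elementary and self-contained; the paper's route is shorter (one citation) and, by identifying the rate as a Legendre transform, also handles the conjugate case in one stroke (there $L$ is linear, so $L^*$ is automatically the indicator of $\{\log d\}$), whereas you treat that case by a separate direct computation. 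Both arguments are valid; neither dwells on the endpoint values $\chi_{\inf},\chi_{\sup}$ more than the other, so your closing remark about the boundary being the ``genuine obstacle'' slightly overstates its role.
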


\proof
For each real $s$   let  $u_s$ be the map defined on $\mathbb{R}$ by
$u_s(x)=sx$.
 We have
\[
L(s):=\lim t_\alpha\log\widehat{k_{-1}}[\nu_\alpha](e^{
u_s/t_\alpha})=\lim t_\alpha\log\nu_\alpha(e^{u_s\circ\widehat{k_{-1}}/t_\alpha})=
\mathbb{L}(u_s\circ\widehat{k_{-1}})
\]
\[=\sup_{\mu\in\mathcal{M}(J)}
\{u_s\circ\widehat{k_{-1}}(\mu)-I^{f}(\mu)\}=\sup_{\mu\in\mathcal{M}(J,T)}
\{\mu(sk_{-1})-P(T,k_t)+\mu(k_t)+h_\mu(T)\}\]
\[=P(t-s)-P(t),\] where the existence of the limits and  the
fourth equality follow from  the large deviation principle for
$(\nu_\alpha)$ and the Varadhan's theorem applied to the bounded
continuous function $u_s\circ\widehat{k_{-1}}_{\mid \mathcal{M}(J)}$ (\cf \cite{dem}, \cite{com-TAMS-03}).
The map $L$ is then differentiable on $\mathbb{R}$, and  consequently
  $I^{k_t}_{k_{-1}}=L^*$ by G\"{a}rtner-Ellis theorem (\cite{dem}). This proves
(\ref{level-1-Lya-theo-eq3}) since $P(t)=\log d(1-t)$ when $T$ is
conjugate to $z\mapsto z^{\pm d}$. Assume that $T$ is not conjugate
to $z\mapsto z^{\pm d}$. We have
\begin{equation}\label{level-1-Lya-theo-eq6}
\forall\mu\in\mathcal{M}(J,T),\ \ \ \ \ I^{k_{t}}(\mu)=P(t)+t\chi(\mu)-h_{\mu}(T)
\end{equation}
 and
\[
P(s_{\chi(\mu)})=  -s_{\chi(\mu)}\chi(\mu_{s_{\chi(\mu)}})+h_{\mu_{s_{\chi(\mu)}}}(T)=
-s_{\chi(\mu)}\chi(\mu)+h_{\mu_{s_{\chi(\mu)}}}(T)
\]
\[ \ge
-s_{\chi(\mu)}\chi(\mu)+h_\mu(T),
\]hence
\begin{equation}\label{level-1-Lya-theo-eq6.1}
\forall\mu\in\mathcal{M}(J,T),\ \ \ \ \ h_{\mu}(T)\le
h_{\mu_{s_{\chi(\mu)}}}(T).
\end{equation}
Combining (\ref{level-1-Lya-theo-eq6}) and (\ref{level-1-Lya-theo-eq6.1}) we get
\[
\forall x\in\mathbb{R},\ \ \
 \ \ I^{k_{t}}_{k_{-1}}(x)=
 \inf_{\mu\in\mathcal{M}(J,T),\chi(\mu)=x}I^{k_{t}}(\mu)=
\inf_{s\in\mathbb{R},\chi(\mu_s)=x}I^{k_{t}}(\mu_s)
\]
\[=P(t)+\inf_{s\in\mathbb{R},\chi(\mu_s)=x} \{t\chi(\mu_s)-h_{\mu_{s_{\chi(\mu_s)}}}(T)\},\]
 which gives  (\ref{level-1-Lya-theo-eq4}).
 If  $L^*$ has a subgradient  at $x$, then it is unique
(namely, the  real $s$ such that $L'(s)=x$), hence $L^*$ is strictly
convex on the interior of its effective domain and essentially
smooth (\cite{roc}, Corollary 26.3.1). In all cases,
 the
 last assertion  follows from Theorem \ref{LDP} b).
\endproof

\begin{remark}\label{remark-upper-bounds}
In the proof of Theorem \ref{LDP}, in order to get the upper bounds with rate function ${Q_f}^*$, in view of Theorem 4.5.3 of \cite{dem} we  only need the  inequality
 $\overline{{\mathbb{L}}}^*\ge{Q_f}^*$  in place of (\ref{LDP-eq-6}), and therefore the inequality
$\overline{\mathbb{L}}(\widehat{g})\le
Q_f(g)$ for all $g$ in a dense subset of $C(J)$ is sufficient; in particular, the existence of $\mathbb{L}(g)$  is not necessary. Also, part b) of Theorem \ref{LDP} follows from the upper bounds.
\end{remark}

\section{Examples}\label{section-examples}

In this section we apply the preceding  results to various sequences
 of measures on $\mathcal{M}(J)$ with powers  $(1/n)$. Our first example is analogue to
those of \cite{pol_CMP_96} and \cite{Pollicott-Sridharan-07-JDSGT}  constructed by
means of preimages and periodic points, respectively. The change
consists in replacing these points by the elements of maximal
separated sets, and allowing $f$ to be  any element of $C(J)$; note that the  techniques based on  \cite{Kifer-90-TAMS} do not work here  when $f$ has more than one equilibrium state (\cf   \S \ref{important-remark}).
All
the results obtained hold verbatim for preimages and periodic
points, when $f$ is H\"{o}lder continuous (\cf \S \ref{pre-per}).

In order  to handle the above atomic examples as well as the case of distribution of
Birkhoff averages with respect to the measure of maximal entropy (Remark \ref{Birk-general}), we shall consider the
following scheme. For each $y\in J$ and each integer
$n\ge 1$ we define the measure
$\mu_{y,n}=\frac{1}{n}(\delta_y+...+\delta_{T^{n-1}y})$, and for
each $k\in C(J)$  we put
$S_n(k)(y)=k(y)+...+k(T^{n-1}y)$. We shall fix some $f\in C(J)$ and
consider a suitable sequence $(J_n)$ of subspaces of $J$, each one
provided with a Borel probability measure $p_{n,f}$, and set
$\nu_{n,f}=W_n[p_{n,f}]$, where $W_n$ is the $\mathcal{M}(J)$-valued random
variable on $J_n$ defined by $W_n(y)=\mu_{y,n}$. The large deviation principles  are obtained from Theorem \ref{LDP},  once checked that $(\nu_{n,f})$ fulfils the general hypothesis.

\subsection{Separated sets, $f\in C(J)$}\label{subsection-separated-set}

Let $f\in C(J)$, let $\varepsilon_0$ be the  expansivity constant
for  $J$, let
 $\varepsilon<\varepsilon_0/2$,
 let $J_n$
 be a maximal
$(\varepsilon,n)$-separated set,  and let $p_{n,f}$ be the probability
measure having mass $p_{n,f}(y)=\frac{e^{S_n(f)(y)}}{\sum_{z\in
J_n}e^{S_n(f)(z)}}$ at each $y\in J_n$, so that $\nu_{n,f}=\sum_{y\in
J_n} p_{n,f}(y)\delta_{\mu_{y,n}}$ (note that
$p_{n,f}=\textnormal{Card}(J_n)^{-1}$ when $f=0$).

 \subsubsection{Level-2 large deviation principles}\label{subsection-separated-set-level-2}
Direct computations yield for each $g\in C(J)$,
\begin{equation}\label{ex-pre-per-eq3}
 \frac{1}{n}\log\nu_{n,f}(e^{n\widehat{g}})=
 \frac{1}{n}\log\sum_{y\in J_n}
e^{S_n(f+g)(y)}-  \frac{1}{n}\log\sum_{y\in J_n}
e^{S_n(f)(y)},\end{equation}  and since by definition of the pressure
\begin{equation}\label{ex-pre-per-eq4}
\lim\frac{1}{n}\log\sum_{y\in J_n} e^{S_n(k)(y)}=P(T,k)
\end{equation}
for all $k\in C(J)$ (\cite{Rue-78}), we get  $\mathbb{L}(\widehat{g})=Q_f(g)$ by taking the limit in
(\ref{ex-pre-per-eq3}), and the hypothesis of Theorem \ref{LDP}
is fulfilled; consequently all the conclusions of this theorem hold.
 Explicitly, the large deviation principle means that
\begin{equation}\label{ex-pre-per-eq6}
\limsup\frac{1}{n}\log\sum_{y\in J_n,\mu_{y,n}\in
F}p_{n,f}(y)\le-\inf_{\mu\in
F\cap\mathcal{M}(J,T)}\{P(T,f)-\mu(f)-h_{\mu}(T)\}
\end{equation}
 for all closed sets
$F\subset\mathcal{M}(J)$, and
\begin{equation}\label{ex-pre-per-eq7}
\liminf\frac{1}{n}\log\sum_{y\in J_n,\mu_{y,n}\in
G}p_{n,f}(y)\ge-\inf_{\mu\in
G\cap\mathcal{M}(J,T)}\{P(T,f)-\mu(f)-h_{\mu}(T)\}
\end{equation} for all open sets
$G\subset\mathcal{M}(J)$.
 When $G\subset\mathcal{M}(J)$ is a convex open set  containing some
invariant measure, the lower limit in (\ref{ex-pre-per-eq7}) is a
limit and  the inequality is an equality; furthermore,  there exists
a sequence of invariant measures $(\mu_m)$, each of which being the unique equilibrium state for some H\"{o}lder continuous function, converging weakly$^*$ and in entropy to some invariant
measure $\mu\in\overline{G}$ realizing the infimum of $I^f$ on
$\overline{G}$ and $G$, \ie,
\begin{equation}\label{ex-pre-per-eq8}
\lim\frac{1}{n}\log\sum_{y\in J_n,\mu_{y,n}\in
G}p_{n,f}(y)=\lim\frac{1}{n}\log\sum_{y\in J_n,\mu_{y,n}\in
\overline{G}}p_{n,f}(y)
\end{equation}
\[=\mu(f)+h_{\mu}(T)-P(T,f)=\lim \mu_m(f)+h_{\mu_m}(T)-P(T,f).\]
The  R.H.S. of
 (\ref{ex-pre-per-eq6}) is
 strictly negative when $F\cap\mathcal{M}_f(J,T)=\emptyset$.
 If furthermore we assume that $F$ is convex with nonempty interior, since $F$ is necessarily regular (\cite{sch}, pp. 38), we can apply
 (\ref{ex-pre-per-eq8}) with $G=\textnormal{Int}(F)$  so  that
 the
upper limit  in (\ref{ex-pre-per-eq6}) is a
limit and  the inequality is an equality.
 The last assertion of Theorem \ref{LDP} a) yields for each invariant measure $\mu$,
\begin{equation}\label{ex-pre-per-eq8.1}
h_{\mu}(T)=P(T,f)-\mu(f)+\lim_{\varepsilon\rightarrow
0}\lim\frac{1}{n}\log\sum_{y\in
J_n,\rho(\mu,\mu_{y,n})<\varepsilon}p_{n,f}(y),
\end{equation}
where $\rho$ is  any distance on $\mathcal{M}(J)$ compatible with the
weak$^*$-topology,  and for which the open balls are
 convex. Since
\[\log\sum_{y\in
J_n,\rho(\mu,\mu_{y,n})<\varepsilon}p_{n,f}(y)=\log\sum_{y\in
J_n,\rho(\mu,\mu_{y,n})<\varepsilon}
{e^{S_n(f)(y)}}-\log{\sum_{z\in
J_n}e^{S_n(f)(z)}},\]
and
\[\lim\frac{1}{n}\log{\sum_{z\in
J_n}e^{S_n(f)(z)}}=P(T,f)\] by
  (\ref{ex-pre-per-eq4}),  from   (\ref{ex-pre-per-eq8.1})   we get  the following expression for the measure-theoretic
entropy of $\mu$,
 \begin{equation}\label{ex-pre-per-eq8.1.0}
h_{\mu}(T)=-\mu(f)+\lim_{\varepsilon\rightarrow
0}\lim\frac{1}{n}\log\sum_{y\in
J_n,\rho(\mu,\mu_{y,n})<\varepsilon}{e^{S_n(f)(y)}},
\end{equation}
and   taking $f=0$,
\begin{equation}\label{ex-pre-per-eq8.1.1}
h_{\mu}(T)=\lim_{\varepsilon\rightarrow
0}\lim\frac{1}{n}\log\textnormal{Card\ } \{y\in
J_n:\rho(\mu,\mu_{y,n})<\varepsilon\}.
\end{equation}
When  $f$ has a unique  equilibrium state $\mu_f$ we obtain from Theorem \ref{LDP} b),
 \begin{equation}\label{ex-pre-per-eq10}
 \lim\sum_{y\in
J_n} p_{n,f}(y)\mu_{y,n}=\mu_f.
\end{equation}

 \subsubsection{Level-1 large deviation principles}\label{level-1-pre-per}
 By applying  Corollary
\ref{LDP-level-1} to the sequence $(\nu_{n,f})$ and  any $k\in C(J)$, we
 obtain the following large deviation results.
\begin{equation}\label{level-1-pre-per-eq1}
\limsup\frac{1}{n}\log\sum_{y\in J_n,\frac{S_n(k)(y)}{n}\in
C}p_{n,f}(y)\le-\inf_{\mu\in\mathcal{M}(J,T),\mu(k)\in
C}\{P(T,f)-\mu(f)-h_{\mu}(T)\}
\end{equation}
 for all closed sets
$C\subset\mathbb{R}$, and
\begin{equation}\label{level-1-pre-per-eq2}
\liminf\frac{1}{n}\log\sum_{y\in J_n,\frac{S_n(k)(y)}{n}\in
U}p_{n,f}(y)\ge-\inf_{\mu\in\mathcal{M}(J,T),\mu(k)\in
U}\{P(T,f)-\mu(f)-h_{\mu}(T)\}
\end{equation} for all open sets
$U\subset\mathbb{R}$. For each $\mu\in\mathcal{M}(J,T)$ and  each
$\varepsilon$ small enough we have
\begin{equation}\label{level-1-pre-per-eq3}
\lim\frac{1}{n}\log\sum_{y\in U_{n,\mu,\varepsilon}}
p_{n,f}(y)=-\inf_{\mu'\in G_{k,\mu,\varepsilon}\cap\mathcal{M}(J,T)}
\{P(T,f)-\mu'(f)-h_{\mu'}(T)\}
\end{equation}
\[=\lim\frac{1}{n}\log\sum_{y\in \overline{U_{n,\mu,\varepsilon}}}
p_{n,f}(y)=-\inf_{\mu'\in
\overline{G_{k,\mu,\varepsilon}}\cap\mathcal{M}(J,T)}\{P(T,f)-\mu'(f)-h_{\mu'}(T)\},
\]
 with $U_{n,\mu,\varepsilon}=\{y\in
J_n:|\frac{S_n(k)(y)}{n}-\mu(k)|>\varepsilon\}$ and
$G_{k,\mu,\varepsilon}=\{\mu'\in\mathcal{M}(J):|\mu'(k)-\mu(k)|>\varepsilon\}$.

 \subsubsection{Lyapunov exponents}\label{level-1-pre-per-Lya}
 We assume here that $T$ is not conjugate to
$z\mapsto z^{\pm d}$, and  we specialize \S \ref{level-1-pre-per} by
taking
 $f=k_t$ ($t$ any real) and $k=k_{-1}$, so that
    (\ref{level-1-pre-per-eq1})-(\ref{level-1-pre-per-eq3})   hold
    with $\log|{T^n}'(y)|$, $\frac{|{T^n}'(y)|^{-t}}{\sum_{z\in J_n}|{T^n}'(z)|^{-t}}$,
    $\chi(\mu)$ in place of
    $S_n(k)(y)$,     $p_{n,f}(y)$, $\mu(k)$,  respectively. Let us detail the case where  $\mu=\mu_t$ in (\ref{level-1-pre-per-eq3}) (recall that $\mu_t$ is the unique equilibrium state for $k_t$), and first note that the members of (\ref{level-1-pre-per-eq3})
     are strictly negative. Furthermore,
(\ref{level-1-pre-per-eq3}) is specified with  Theorem
\ref{level-1-Lya-theo}  since
\[\sum_{y\in
U_{n,\mu_t,\varepsilon}} p_{n,k_t}(y)=\nu_{n,k_t}(\{\mu\in\mathcal{M}(J):
|\mu({k_{-1}})-\chi(\mu_t)|>\varepsilon\})=\nu_{n,k_t}(G_{k_{-1},\mu_t,\varepsilon}).\]More
precisely,
from the properties of $I^{k_t}_{k_{-1}}$, and since
\begin{equation}\label{level-1-pre-per-Lya-eq8}
P(t)=\lim\frac{1}{n}\log\sum_{z\in J_n}|{T^n}'(z)|^{-t}
\end{equation}
 by (\ref{ex-pre-per-eq4}),
we deduce  the
following relations for each $(t,s)\in\mathbb{R}^2$,
\begin{equation}\label{level-1-pre-per-Lya-eq9}
h_{\mu_{s}}(T)-t\chi(\mu_s) =\left\{
\begin{array}{ll}
\lim\frac{1}{n}\log\sum_{\{y\in
J_n:\frac{\log|{T^n}'(y)|}{n}>\chi(\mu_s)\}}
{|{T^n}'(y)|^{-t}} &
\textnormal{if}\ s\le t
\\ \\
\lim\frac{1}{n}\log\sum_{\{y\in
J_n:\frac{\log|{T^n}'(y)|}{n}<\chi(\mu_s)\}}
{|{T^n}'(y)|^{-t}} &
\textnormal{if}\ s\ge t,
\end{array}
\right.
\end{equation}
 and for each real $t$ and each $\varepsilon>0$ small enough,
\begin{equation}\label{level-1-pre-per-Lya-eq10}
\lim\frac{1}{n}\log\sum_{\{y\in
J_n:|\frac{\log|{T^n}'(y)|}{n}-\chi(\mu_t)|>\varepsilon\}}
\frac{|{T^n}'(y)|^{-t}}{\sum_{z\in J_n}|{T^n}'(z)|^{-t}}=
\end{equation}
\[\max\{h_{\mu_{s_{\chi(\mu_t)+\varepsilon}}}(T)-t\varepsilon,
h_{\mu_{s_{\chi(\mu_t)-\varepsilon}}}(T)+t\varepsilon\}-h_{\mu_t}(T).\]
Taking  $s=t$ in (\ref{level-1-pre-per-Lya-eq9}) gives   formulas for $P(t)$, and taking
$t=0$ in
(\ref{level-1-pre-per-Lya-eq9}) yields
 the following formula for the entropy valid for each real $s$,
\begin{equation}\label{level-1-pre-per-Lya-eq11}
h_{\mu_{s}}(T)=\left\{
\begin{array}{ll}
\lim\frac{1}{n}\log\textnormal{Card}\{y\in
J_n:\frac{\log|{T^n}'(y)|}{n}>\chi(\mu_s)\}
 & \textnormal{if}\ s\le 0
\\ \\
\lim\frac{1}{n}\log\textnormal{Card}\{y\in
J_n:\frac{\log|{T^n}'(y)|}{n}<\chi(\mu_s)\}
 & \textnormal{if}\ s\ge 0.
\end{array}
\right.
\end{equation}
We obtain also from
 the last assertion  of  Theorem \ref{level-1-Lya-theo},
\[
 \forall t\in\mathbb{R},\ \ \ \ \ \lim\sum_{y\in
J_n}\frac{|{T^n}'(y)|^{-t}}{\sum_{z\in
J_n}|{T^n}'(z)|^{-t}}\frac{\log|{T^n}'(y)|}{n}=\chi(\mu_t).
\]
Note that the strict inequality can be replaced by an inequality  in
(\ref{level-1-pre-per-Lya-eq9}), (\ref{level-1-pre-per-Lya-eq10}),
(\ref{level-1-pre-per-Lya-eq11}); applying that to (\ref{level-1-pre-per-Lya-eq9}) with $s=t$,  we recover (\ref{level-1-pre-per-Lya-eq8}).

\subsection{Pre-images (resp. periodic points), $f$ H\"{o}lder
continuous}\label{pre-per}

 Let $f$ be a H\"{o}lder continuous function on $J$,
  let $x\in J$,
  put
$J_n=\{T^{-n}(x)\}$ (resp. $J_n=\{y\in J:T^n(y)=y\}$), and let $\nu_{n,f}$
defined analogously to \S \ref{subsection-separated-set}, namely
 $\nu_{n,f}=\sum_{y\in J_n}
p_{n,f}(y)\delta_{\mu_{y,n}}$ with
$p_{n,f}(y)=\frac{e^{S_n(f)(y)}}{\sum_{z\in J_n}e^{S_n(f)(z)}}$ for
  all $y\in J_n$. Then (\ref{ex-pre-per-eq4}) holds when  $k$ is H\"{o}lder continuous (\cf \cite{prz_BBMS_90} for preimages,
 \cite{parry-poll_PMH_75} for periodic points) and since $f+g$ is
H\"{o}lder continuous when $g$ is,  by taking the limit in
(\ref{ex-pre-per-eq3}) we get $\mathbb{L}(\widehat{g})=Q_f(g)$ for all such $g$,
  so that the hypothesis of
Theorem \ref{LDP} is fulfilled.  Consequently,  all the
conclusions  of \S  \ref{subsection-separated-set-level-2},
\S \ref{level-1-pre-per} and
 \S \ref{level-1-pre-per-Lya}
  hold verbatim with $J_n$ as above. In
particular,  since $f$ has a unique equilibrium state $\mu_f$,
(\ref{ex-pre-per-eq10}) holds, and the members of
(\ref{level-1-pre-per-eq3}) with
$\mu=\mu_f$ are strictly negative.

\begin{remark}\label{Birk-general}
The results of \cite{lop} concerning the Birkhoff averages with respect to  the measure of maximal entropy $\mu_0$,  can be easily recovered from Theorem \ref{LDP} and Theorem \ref{level-1-Lya-theo}.
Indeed,  take $f=0$, $J_n=J$ and  $p_{n,0}=\mu_0$ for all $n\ge 1$, so that
\[\nu_{n,0}(\cdot)=\mu_0(\{y\in J:\mu_{y,n}\in\cdot\}),\]
and
\begin{equation}\label{level-2-birk-eq2}
\lim\frac{1}{n}\log\nu_{n,0}(e^{n\widehat{g}})=
\lim\frac{1}{n}\log\mu_0(e^{S_n(g)})=Q_0(g)=P(g)-\log d
\end{equation}
 for all $g\in C(J)$  (\cite{lop}, Remark 1), hence
the hypotheses of Theorem \ref{LDP} is satisfied, and we  recover the level-2
 large deviation principle for $(\nu_{n,0})$ as in  Theorem 7  of \cite{lop}. In particular,
 the expression of the entropy is given for each
$\mu\in\mathcal{M}(J,T)$ (and any distance $\rho$ as in
(\ref{ex-pre-per-eq8.1})) by
\begin{equation}\label{level-2-birk-general-eq7}
h_{\mu}(T)=\log
d+\lim_{\varepsilon\rightarrow 0}\lim\frac{1}{n}\log\mu_0(\{y\in
J:\rho(\mu,\mu_{y,n})<\varepsilon\})
\end{equation}
(we note that  in Remark 5 of \cite{lop} the term $\log d$ is missing).
 Assuming  that $T$ is not
conjugate to $z\mapsto z^{\pm d}$, and taking $f=0$ and
$k=k_{-1}$ in Theorem \ref{level-1-Lya-theo},  the analogues  of (\ref{level-1-pre-per-Lya-eq9}) and
(\ref{level-1-pre-per-Lya-eq10}) are respectively for each real $s$,
\begin{equation}\label{level-1-Birk-general-eq1}
 h_{\mu_{s}}(T)=\left\{
\begin{array}{ll}
\log d+\lim\frac{1}{n}\log\mu_0(\{y\in
J:\frac{\log|{T^n}'(y)|}{n}>\chi(\mu_s)\}) & \textnormal{if}\ s\le 0
\\ \\
\log d+\lim\frac{1}{n}\log\mu_0(\{y\in
J:\frac{\log|{T^n}'(y)|}{n}<\chi(\mu_s)\}) & \textnormal{if}\ s\ge
0,
\end{array}
\right.
\end{equation}
and for each $\varepsilon>0$ small enough,
\begin{equation}\label{level-1-Birk-general-eq2}
\lim\frac{1}{n}\log\mu_0(\{y\in
J:|\frac{\log|{T^n}'(y)|}{n}-\chi(\mu_0)|>\varepsilon\})=
\end{equation}
\[
\max\{h_{\mu_{s_{\chi(\mu_0)+\varepsilon}}}(T),
h_{\mu_{s_{\chi(\mu_0)-\varepsilon}}}(T)\}-\log d.\]
The formula (\ref{level-1-Birk-general-eq1}) can easily be deduced from \cite{lop}, and (\ref{level-1-Birk-general-eq2}) corresponds to Corollary 2 of \cite{lop}.
\end{remark}

\section{Generalization - Examples}\label{section-extension}

It is easy to see that  the proof of  Theorem  \ref{LDP} does not depend on the dynamics of rational maps. In fact, it rests on two basic ingredients:   the equality of functionals $\mathbb{L}(\widehat{\cdot})=Q_f(\cdot)$,  and the approximation property (given by Theorem \ref{Lopes-theorem}) combined  with Lemma \ref{relation-l*-Hmu}. These conditions
involving nets of measures on the phase space and notions of thermodynamical formalism,
 they  can be defined (and Lemma \ref{relation-l*-Hmu}  proved) as well for general dynamical systems in the sense of Ruelle's book (\cite{Rue-78}).  In this section, after stating the  general version (Theorem \ref{LDP-general}), we observe that the main  result of \cite{Gurevich-Tempelman-05-PTRF} concerning  the multidimensional  full shift amounts to  the approximation property, and thus furnishes  an example of  distinct nature from the one dimensional system given by  rational  maps, but sharing similar large deviation principles  for the same kinds of measures  with moreover the same proof (Theorem \ref{LDP-fullshift-atomic}). Again here   the techniques of  \cite{Kifer-90-TAMS} do not apply   when $f$ has several equilibrium states.

Let $\Omega$ be a non-empty compact metrizable space, let $l$ be a strictly positive integer,
put  $\mathbb{Z}^{l}_+=\{x\in\mathbb{Z}^l: x_i\ge 0, 1\le i\le l\}$, and let $\tau$ be a
 representation  of the semi-group $\mathbb{Z}^{l}_+$ (resp. group $\mathbb{Z}^{l}$) in the semi-group of continuous endomorphisms
 (resp. group of homeomorphisms) of  $\Omega$.
Let  $C(\Omega)$, $\mathcal{M}(\Omega)$,
$\mathcal{M}^{\tau}(\Omega)$, $\mathcal{M}^{\tau}_f(\Omega)$,
$h^{\tau}_{\cdot}$, $P^{\tau}(\cdot)$ be  the obvious analogues of
$C(J)$, $\mathcal{M}(J)$, $\mathcal{M}(J,T)$, $\mathcal{M}_f(J,T)$,
$h_{\cdot}(T)$, $P(T,\cdot)$ defined in  \S \ref{general-results}, and  assume that
 $h^{\tau}$ is finite and upper semi-continuous. For each $f\in C(\Omega)$ we define the function
 $I^f$  on
$\mathcal{M}(\Omega)$ by
\begin{displaymath}\label{LDP-upper-bounds-eq1}
I^f(\mu)=\left\{
\begin{array}{ll}
P^{\tau}(f)-\mu(f)-h^{\tau}(\mu) & \textnormal{if}\
\mu\in\mathcal{M}^{\tau}(\Omega)
\\ \\
+\infty & \textnormal{if}\
\mu\in{\mathcal{M}}(\Omega)\verb'\'\mathcal{M}^{\tau}(\Omega),
\end{array}
\right.
\end{displaymath}
so that  $I^f$ vanishes exactly on $\mathcal{M}^{\tau}_f(\Omega)$.  The analogue of the approximation property of Theorem \ref{Lopes-theorem}
 takes  the following general form.

\begin{property}\label{general-approx-property}
For each $\mu\in\mathcal{M}^{\tau}(\Omega)$  there is a net  $(k_i)_{i\in\wp_\mu}$ in $C(\Omega)$ such that  $k_i$ has a unique equilibrium state $\mu_i$ for all $i\in\wp_\mu$, and  the net $(\mu_i)_{i\in\wp_\mu}$ satisfies $\lim\mu_i=\mu$ and
$\lim h^{\tau}_{\mu_i}=h^{\tau}_{\mu}$.
\end{property}

We can now state the general version of Theorem  \ref{LDP},  whose proof is entirely  similar (just  take account of Remark \ref{remark-upper-bounds} for a), and use Property \ref{general-approx-property} in place of Theorem \ref{Lopes-theorem} for b)). We let the reader establish  the analogue of Corollary \ref{LDP-level-1}.

\begin{theorem}\label{LDP-general}
Let $f\in C(\Omega)$, let $(\nu_\alpha)$ be a net of Borel probability measures on $\mathcal{M}(\Omega)$, let $(t_\alpha)$ be a net in $]0,+\infty[$ converging to $0$,  let $\overline{\mathbb{L}}$ be the associated large deviation functional, and assume there is a dense set $C\subset C(\Omega)$ such that
 \begin{equation}\label{LDP-general-eq2}
\forall g\in C,\ \ \ \ \ \ \  \overline{\mathbb{L}}(\widehat{g})\le
P^{\tau}(f+g)-P^{\tau}(f).
\end{equation}
\begin{nitemize}
\item[\textnormal{a)}] For each closed set $F\subset\mathcal{M}(\Omega)$ we have
\[\limsup\  t_\alpha\log\nu_\alpha(F)\le-\inf_{\mu\in F} I^f(\mu).\]
   Each limit point of $(\nu_\alpha)$ has its support included in $\mathcal{M}^{\tau}_f(\Omega)$; in particular,
$\lim\nu_\alpha=\delta_{\mu_f}$ when $f$ has a unique equilibrium sate $\mu_f$.
\item[\textnormal{b)}]
 If Property \ref{general-approx-property}  holds and  $\mathbb{L}(\widehat{g})$ exists for all $g\in C$ with an equality in  (\ref{general-approx-property}), then for each open  set $G\subset\mathcal{M}(\Omega)$ we have
 \[\liminf\  t_\alpha\log\nu_\alpha(G)\ge-\inf_{\mu\in G} I^f(\mu);\]
when $G$ is moreover convex and contains some invariant measure,  we have
\begin{equation}\label{LDP-general-eq4}
\lim
t_{\alpha}\log\nu_\alpha(G)=\lim
t_{\alpha}\log\nu_\alpha(\overline{G})=-\inf_{\mu\in
\overline{G}}I^f(\mu)=-\inf_{\mu\in G\cap\cE'}I^f(\mu),
\end{equation}
where $\cE'$ is the set of  equilibrium states of all elements in $\{k_i:i\in\wp_\mu,\mu\in\mathcal{M}^\tau(\Omega)\}$.
 In particular we obtain    for  each $\mu\in\mathcal{M}^{\tau}(\Omega)$ and each  convex
local basis $\mathcal{G}_\mu$  at $\mu$,
\[
h^{\tau}(\mu)=P^{\tau}(f)-\mu(f)+\inf\{\lim\ t_\alpha\log\nu_\alpha(G):
G\in\mathcal{G}_\mu\}.
\]
\end{nitemize}
\end{theorem}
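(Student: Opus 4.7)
The plan is to mirror the proof of Theorem \ref{LDP} line by line, substituting Property \ref{general-approx-property} for Theorem \ref{Lopes-theorem} and invoking the obvious generalization of Lemma \ref{relation-l*-Hmu} (whose derivation is unchanged, resting only on upper semi-continuity and affinity of $h^{\tau}$); this generalization identifies $Q_f^{*}$ on $\widetilde{\mathcal{M}}(\Omega)$ with $I^f$ (extended to $+\infty$ off $\mathcal{M}^{\tau}(\Omega)$). I would work in the ambient vector space $\widetilde{\mathcal{M}}(\Omega)$ and descend to $\mathcal{M}(\Omega)$ at the end via Lemma 4.1.5 of \cite{dem}, since $\mathcal{M}(\Omega)$ is weak$^{*}$-closed; exponential tightness and the boundedness required by Theorem \ref{Baldi-theorem} are automatic because each $\nu_{\alpha}$ is supported on the compact set $\mathcal{M}(\Omega)$.

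For part (a), I would first use continuity of $g\mapsto\overline{\mathbb{L}}(\widehat{g})-Q_f(g)$ in the uniform metric (a consequence of $\sup_{\mathcal{M}(\Omega)}|\widehat{g}|=\sup_{\Omega}|g|$ and continuity of $Q_f$) to extend (\ref{LDP-general-eq2}) from $C$ to all of $C(\Omega)$; this yields $\overline{\mathbb{L}}^{*}\ge Q_f^{*}$, whence Remark \ref{remark-upper-bounds} (\ie Theorem 4.5.3 of \cite{dem}) furnishes the upper bounds with rate function $I^f$. For the support claim, I would fix $\mu\notin\mathcal{M}^{\tau}_f(\Omega)$ and use lower semi-continuity of $I^f$ with closedness (hence compactness) of $\mathcal{M}^{\tau}_f(\Omega)$ to produce an open $U\ni\mu$ with $\overline{U}\cap\mathcal{M}^{\tau}_f(\Omega)=\emptyset$ and hence $\inf_{\overline{U}}I^f>0$; the upper bound forces $\nu_{\alpha}(\overline{U})\to 0$, and for any subnet limit $\nu$ the portmanteau inequality gives $\nu(U)\le\liminf\nu_{\beta}(U)=0$. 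Covering the open set $\mathcal{M}(\Omega)\verb'\'\mathcal{M}^{\tau}_f(\Omega)$ by countably many such $U$ (Lindel\"of-ness in the separable metric space $\mathcal{M}(\Omega)$) concludes.

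For part (b), the same continuity trick applied also to $\underline{\mathbb{L}}(\widehat{\cdot})-Q_f(\cdot)$ upgrades $\mathbb{L}(\widehat{g})=Q_f(g)$ to all of $C(\Omega)$, so $\mathbb{L}^{*}=Q_f^{*}$. The last assertion of the general Lemma \ref{relation-l*-Hmu} identifies the exposed points: $\mu\in\cE$ with exposing hyperplane $\widehat{g}$ iff $\mu$ is the unique equilibrium state for $f+g$; since $\mathbb{L}$ is finite on $C(\Omega)$, the set $\cE$ in Theorem \ref{Baldi-theorem} coincides with this full set of exposed points. Setting $g_i=k_i-f$ in Property \ref{general-approx-property} puts the approximating measures $\mu_i$ in $\cE'\subset\cE$; weak$^{*}$ convergence $\mu_i\to\mu$ combined with $h^{\tau}_{\mu_i}\to h^{\tau}_{\mu}$ and $\mu_i(f)\to\mu(f)$ yields $I^f(\mu_i)\to I^f(\mu)$, and since $I^f=+\infty$ off $\mathcal{M}^{\tau}(\Omega)$ this gives for every open $G$
\[
\inf_{G}I^f=\inf_{G\cap\mathcal{M}^{\tau}(\Omega)}I^f=\inf_{G\cap\cE'}I^f,
\]
which is precisely the Baldi exposed-point condition. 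Theorem \ref{Baldi-theorem} then produces the LDP on $\widetilde{\mathcal{M}}(\Omega)$ with rate $\mathbb{L}^{*}$, whose restriction to $\mathcal{M}(\Omega)$ supplies the lower bound.

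For the reinforcement (\ref{LDP-general-eq4}), I would copy the convexity argument of Theorem \ref{LDP}: if $G$ is convex and contains some invariant $\mu'$ while $\mu\in\overline{G}$ attains $\inf_{\overline{G}}I^f$ outside $G$, the convex combinations $\mu_n=\lambda_n\mu'+(1-\lambda_n)\mu$ with $\lambda_n\downarrow 0$ lie in $G$ (Schaefer), converge to $\mu$, and satisfy $I^f(\mu_n)\to I^f(\mu)$ by affinity of $I^f$ on $\mathcal{M}^{\tau}(\Omega)$, forcing $\inf_G I^f=\inf_{\overline{G}}I^f$ and therefore, by the displayed identity, the full chain in (\ref{LDP-general-eq4}). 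The entropy expression is then immediate from (\ref{preliminaries-eq2}) applied to a convex local basis, each such $G$ being an $I^f$-continuity set. The only genuinely nontrivial step, and the main obstacle to the whole scheme, is the verification of the Baldi exposed-point condition: this is precisely where Property \ref{general-approx-property} is indispensable, for without the weak$^{*}$-plus-entropy density of unique equilibrium states one cannot upgrade the pointwise characterization of $\cE$ to the infimum-over-open-sets statement required by Baldi, and only the upper bounds of part (a) survive.
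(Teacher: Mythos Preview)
Your proposal is correct and follows essentially the same approach as the paper, which simply states that the proof is ``entirely similar'' to that of Theorem \ref{LDP}, invoking Remark \ref{remark-upper-bounds} for a) and substituting Property \ref{general-approx-property} for Theorem \ref{Lopes-theorem} in b). Your write-up is in fact a faithful expansion of that one-line proof sketch, and is slightly more careful than the paper in the support argument (you use lower semi-continuity of $I^f$ and a Lindel\"of cover, whereas the paper's proof of Theorem \ref{LDP} b) treats only $\mu\in\mathcal{M}(J,T)\setminus\mathcal{M}_f(J,T)$ explicitly).
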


\begin{remark}\label{remark-strengthened}
Property \ref{general-approx-property} permits to get more than just
a large deviation principle  with rate function $I^f$. Indeed, although the two first equalities in  (\ref{LDP-general-eq4})  are still true without Property \ref{general-approx-property} (combining (\ref{preliminaries-eq2}) with the convexity of $G$ and  the fact that  $I^f$ is affine), the last equality  is given by Property \ref{general-approx-property}, and
shows that   the  exponential behavior of  $\nu_\alpha(G)$  is controlled by the entropy of measures which are unique equilibrium states. More precisely, for each $\varepsilon>0$ there exists $k_\varepsilon\in C(\Omega)$ with unique equilibrium state $\mu_{k_\varepsilon}\in G$ such that eventually,
\[|t_{\alpha}\log\nu_\alpha(G)+I^f(\mu_{k_\varepsilon})|<\varepsilon.\]
\end{remark}

\begin{remark}
Condition (\ref{LDP-general-eq2}) corresponds  to Assumptions (a), (b) of \cite{Pol_DCDS_96} for  suitable measures and $f=0$, and consequently Theorem 2 (resp. Corollary 2.1) of \cite{Pol_DCDS_96} follows immediately from the first (resp. second) assertion of Theorem \ref{LDP-general} a).
\end{remark}

\begin{remark}\label{remark-upper-bounds-gen}
Theorem \ref{LDP-general} a) can be applied to the same dynamics  as the one considered in \S \ref{general-results} with  $T$ any  rational map of degree $d\ge
2$ (without  hyperbolicity condition). For instance, let us  consider the sequence of measures obtained with  preimages and $f$ H\"{o}lder continuous as in \S \ref{pre-per}. It is known that
(\ref{ex-pre-per-eq4}) still holds when $k$ is H\"{o}lder continuous and  $P(T,k)>\sup_J k$, and for general $k\in C(J)$ the equality in (\ref{ex-pre-per-eq4}) has to be replaced by $``\le"$ and the limit
by  a upper limit (\cite{prz_BBMS_90}). Consequently,  when
$P(T,f)>\sup_J f$,  (\ref{LDP-general-eq2}) is fulfilled  with $C$ the set of H\"{o}lder continuous
 functions
  by taking the upper limit in
(\ref{ex-pre-per-eq3}). As a conclusion, we recover  the large deviation results
of \cite{pol_CMP_96}.
\end{remark}

\subsection{The multidimensional full shift}\label{multidimensional-fullshift}

Let $S$ be a finite set, let $\delta\in]0,1[$,  put
$\Omega=S^{\mathbb{Z}^l}$ endowed with the metric $\rho(\xi,\eta)=\delta^{\min\{\max_{1\le i\le l} |x_i|:x\in\Z^l,\xi_x\neq\eta_x\}}$, and let $\tau$ be the
action of $\mathbb{Z}^l$ on $\Omega$ by translations, \ie
 $(\tau^y \xi)_x=\xi_{x+y}$ for all $x,y$ in $\Z^l$  and $\xi\in \Omega$.
A recent result of Gurevich and Tempelman (\cite{Gurevich-Tempelman-05-PTRF}, Theorem 1) can be formulated  in the following way (in fact, the authors show that the continuous functions  appearing  in Property \ref{general-approx-property} can be obtained as mean energy functions associated to some  summable interactions).

\begin{theorem}\label{approx-prop-finite-spin-general-f}
\textnormal{\textbf{(Gurevich-Tempelman)}}
The multidimensional full shift fulfils  Property \ref{general-approx-property}.
\end{theorem}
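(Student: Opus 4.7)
The plan is to derive this result from the main theorem of Gurevich and Tempelman (\cite{Gurevich-Tempelman-05-PTRF}, Theorem 1), which furnishes, for every $\mu\in\mathcal{M}^\tau(\Omega)$, a sequence of summable translation-invariant interactions $(\Phi^{(i)})_{i\ge 1}$ on $\mathbb{Z}^l$ such that each $\Phi^{(i)}$ admits a unique translation-invariant Gibbs state $\mu_i$, with $\mu_i\to\mu$ weakly$^*$ and $h^\tau(\mu_i)\to h^\tau(\mu)$. The task then reduces to repackaging these interactions as continuous functions on $\Omega$ whose equilibrium states, in the variational sense used throughout the paper, are precisely the $\mu_i$ and are unique.

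The first step is to attach to each $\Phi^{(i)}$ its mean energy function
\[
k_i(\xi) \;=\; \sum_{X\ni 0}\frac{\Phi^{(i)}_X(\xi)}{|X|},
\]
which lies in $C(\Omega)$ by summability of $\Phi^{(i)}$. The classical lattice-thermodynamic variational principle (\cite{Rue-78}) then gives $P^\tau(k_i)=\sup\{h^\tau(\nu)-\nu(k_i):\nu\in\mathcal{M}^\tau(\Omega)\}$ and, more importantly, identifies the set of maximizers with the set of translation-invariant Gibbs states of $\Phi^{(i)}$ through the DLR equations. Hence the uniqueness of the Gibbs state of $\Phi^{(i)}$ transfers to uniqueness of the equilibrium state of $k_i$, and this common measure is $\mu_i$. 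The convergences $\mu_i\to\mu$ and $h^\tau(\mu_i)\to h^\tau(\mu)$ are then exactly the conclusions of Gurevich-Tempelman, so Property \ref{general-approx-property} holds with $\wp_\mu=\mathbb{N}$ and the net reduced to a sequence.

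The main obstacle is the Gurevich-Tempelman construction itself, which one would have to retrace in a self-contained exposition. In dimension $l\ge 2$, uniqueness of the Gibbs state is genuinely delicate: phase transitions can occur for simple finite-range interactions, so one cannot merely take interactions reproducing the finite-volume conditional distributions of $\mu$ on cubes of growing size, since the resulting Gibbs simplex may have several extreme points. The heart of the construction is to engineer interactions that are close enough to $\mu$ to force weak$^*$ convergence and, exploiting upper semicontinuity of $h^\tau$, entropy convergence, while being regular or weak enough for a Dobrushin-type uniqueness criterion to apply. This balancing act, rather than the equivalence between mean energy functions and interactions, is the conceptual difficulty.
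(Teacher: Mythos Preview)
Your proposal is correct and matches the paper's own treatment: the paper does not give a proof of this theorem but simply cites \cite{Gurevich-Tempelman-05-PTRF}, Theorem 1, with the parenthetical remark that the continuous functions in Property \ref{general-approx-property} can be taken as mean energy functions associated to summable interactions --- exactly the translation you spell out. One minor point: with the sign convention used throughout the paper (namely $P^{\tau}(k)=\sup_{\nu\in\mathcal{M}^{\tau}(\Omega)}\{h^{\tau}(\nu)+\nu(k)\}$), the Gibbs state for $\Phi^{(i)}$ is the equilibrium state for $-k_i$ rather than $k_i$, so the functions in Property \ref{general-approx-property} should be the negatives of your mean energy functions; this is cosmetic and does not affect the argument.
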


We shall consider nets of measures similar to those of \S \ref{subsection-separated-set-level-2} and \S \ref{pre-per}.  The
``time" $n$ is  replaced by a multidimensional one, namely a net
$(\Lambda_\alpha)_{\alpha\in\wp}$  of
finite subsets of $\mathbb{Z}^l$.
Recall that  $(\Lambda_\alpha)$ is said to converge to $\infty$ in the sense of van Hove (denoted $\Lambda_\alpha\nearrow\infty$)  when $\lim|\Lambda_\alpha|=+\infty$ (where
$|\Lambda_\alpha|=\textnormal{Card\ }\Lambda_\alpha$)
and $\lim|(\Lambda_\alpha+x)\verb´\´\Lambda_\alpha|/|\Lambda_\alpha|=0$ for all $x\in\mathbb{Z}^l$. Put $\mathbb{Z}^l_>=\{x\in\mathbb{Z}^l: x_i>0, 1\le i\le l\}$,  and  for each $x\in\mathbb{Z}^l_>$ define
 $\Lambda(x)=\{y\in\mathbb{Z}^l:0\le y_i<x_i, 1\le i\le l\}$ and  the set of $x$-periodic configurations
  $\textnormal{Per}_x=\{\xi\in\Omega:\tau^{y}\xi=\xi\textnormal{\ for\
all\ }y\in\mathbb{Z}^l(x)\}$, where
$\mathbb{Z}^l(x)$ is the subgroup of $\mathbb{Z}^l$ generated by
$(x_1,0,...,0)$,...,$(0,...,0,x_l)$. Note that  $\Lambda(x)\nearrow\infty$ when $\mathbb{Z}^l_>$ is directed by the lexicographic order and $\lim x_i=+\infty$ for all $i\in\{1,...,l\}$, which will be assumed in what follows.
For each $\xi\in\Omega$   we define
\[\mu_{\xi,\alpha}=\frac{1}{|\Lambda_\alpha|}
\sum_{x\in\Lambda_\alpha}\delta_{\tau^x \xi}\]
and
\[W_\alpha(\xi)=\mu_{\xi,\alpha}.\]
In place of  the sequences
$(J_n)$ we shall consider some nets $(\Omega_\alpha)$ of finite
subsets of $\Omega$;
 ${\mu_{y,n}}$ (resp.  $W_n$)
 is replaced by  $\mu_{\xi,\alpha}$ (resp. $W_\alpha$),
and  the probability measures $p_{n,f}$ by  $p_{\alpha,f}$ with
  \[p_{\alpha,f}(\xi)=\frac{e^{\sum_{x\in\Lambda_\alpha}f(\tau^x \xi)}}{\sum_{\xi'\in
\Omega_\alpha}e^{\sum_{x\in\Lambda_\alpha}f(\tau^x \xi')}}.\]
  We shall  obtain large deviations for  nets $(\nu_{\alpha,f})$ defined by
\[\nu_{\alpha,f}=W_\alpha[p_{\alpha,f}]=\sum_{\xi\in
\Omega_\alpha}p_{\alpha,f}(\xi)\delta_{\mu_{\xi,\alpha}},\] each of which is associated with  some net $\Lambda_\alpha\nearrow\infty$ and corresponds to some
way to obtain the pressure, in the sense that
\begin{equation}\label{section-extension-eq1}
\forall g\in C(\Omega),\ \ \ \ \ \ \   \mathbb{L}(\widehat{g})=\lim\frac{1}{|\Lambda_\alpha|}\log\nu_{\alpha,f}
(e^{|\Lambda_\alpha|\widehat{g}})=
P^{\tau}(f+g)-P^{\tau}(f).
\end{equation}
Once proved the above equality,
the conclusion follows from  Theorem \ref{LDP-general}  and  Theorem \ref{approx-prop-finite-spin-general-f}.

  The next result establishes  the  large deviation principle  for finite supported measures constructed from maximal separated sets (resp. periodic configurations)  as for rational maps.   The explicit forms of the large deviations as well as the  formulas for entropy  are entirely analogue to those for rational maps,  after obvious changes of notations (\cf \S \ref{subsection-separated-set-level-2}, \S \ref{pre-per}). Both cases are    new, and the one of periodic configurations  generalizes  the version of   \cite{Eiz-Kif-Weis-94-CMP} and \cite{Olla-88-PTRF} proved  for $f=0$ (\cf Remark \ref{remark-known-results-full-shift}).

\begin{theorem}\label{LDP-fullshift-atomic}
Let $\Lambda_\alpha\nearrow\infty$ and $(\Omega_\alpha)$ given in one of the following ways.
 \begin{itemize}
 \item[(a)] $\Omega_\alpha$ is
 a maximal $(\varepsilon,\Lambda_\alpha)$-separated set for some $\varepsilon<\delta$;
\item[(b)]
 $\Lambda_\alpha=\Lambda(\alpha)$ and
      $\Omega_\alpha=\textnormal{Per}_\alpha$ for all $\alpha\in\mathbb{Z}^l_>$.
\end{itemize}
Then for each $f\in C(\Omega)$,  all the conclusions of Theorem \ref{LDP-general} hold with  $t_\alpha=|\Lambda_\alpha|^{-1}$ and
\[\nu_{\alpha,f}=\sum_{\xi\in
\Omega_\alpha}p_{\alpha,f}(\xi)\delta_{\mu_{\xi,\alpha}}.\]
\end{theorem}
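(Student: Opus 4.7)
The strategy is to verify the two hypotheses of Theorem \ref{LDP-general}, after which all of its conclusions apply directly. Property \ref{general-approx-property} is exactly the content of Theorem \ref{approx-prop-finite-spin-general-f}, so the only thing left to establish is (\ref{section-extension-eq1}), namely that $\mathbb{L}(\widehat{g}) = P^{\tau}(f+g) - P^{\tau}(f)$ holds as an \emph{equality} (not merely the upper estimate) for every $g \in C(\Omega)$.

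Writing $Z_\alpha(k) := \sum_{\xi \in \Omega_\alpha} e^{\sum_{x\in\Lambda_\alpha} k(\tau^x \xi)}$ and using that $|\Lambda_\alpha|\,\widehat{g}(\mu_{\xi,\alpha}) = \sum_{x\in\Lambda_\alpha} g(\tau^x\xi)$, a direct computation analogous to (\ref{ex-pre-per-eq3}) gives
\[
\frac{1}{|\Lambda_\alpha|}\log\nu_{\alpha,f}(e^{|\Lambda_\alpha|\widehat{g}}) \;=\; \frac{1}{|\Lambda_\alpha|}\log Z_\alpha(f+g) \;-\; \frac{1}{|\Lambda_\alpha|}\log Z_\alpha(f).
\]
Hence (\ref{section-extension-eq1}) reduces to the multidimensional analogue of (\ref{ex-pre-per-eq4}), namely
\[
\lim \frac{1}{|\Lambda_\alpha|}\log Z_\alpha(k) \;=\; P^{\tau}(k) \qquad \forall k \in C(\Omega),
\]
for both choices of $(\Omega_\alpha)$; granting these limits, the hypotheses of Theorem \ref{LDP-general} b) are met and the theorem applies verbatim.

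In case (a), this is the standard characterization of the topological pressure through maximal $(\varepsilon,\Lambda_\alpha)$-separated sets along a van Hove sequence, i.e.\ the multidimensional counterpart of the classical one-dimensional formula from \cite{Rue-78}. The condition $\varepsilon<\delta$ places $\varepsilon$ below an expansivity constant of $\tau$ on the full shift (two configurations are at distance $<\delta$ only if they coincide at the origin, so any two distinct configurations have translates separated by $\delta$), which makes the limit $\varepsilon$-independent and equal to $P^{\tau}(k)$. In case (b) the corresponding identity $\lim |\Lambda(\alpha)|^{-1}\log Z_{\Lambda(\alpha)}(k) = P^{\tau}(k)$ with $\Omega_\alpha = \textnormal{Per}_\alpha$ is classical for the full shift: since every admissible pattern on $\Lambda(\alpha)$ extends to a unique $\alpha$-periodic configuration, the partition function over $\textnormal{Per}_\alpha$ and the one over a maximal separated set on $\Lambda(\alpha)$ differ only through boundary contributions, which are subexponential in $|\Lambda(\alpha)|$ because $(\Lambda(\alpha))$ is van Hove under the condition $\alpha_i\to+\infty$ for all $i$.

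The main obstacle is case (b): while (a) is essentially by definition, passing from separated sets to $\alpha$-periodic configurations relies on the fact that the shift is \emph{full} (so the periodic extension step is unobstructed) together with a careful control of the boundary terms which compares the two partition functions. Once the equality of functionals is established for all $g\in C(\Omega)$, Theorem \ref{LDP-general} yields both the upper and lower large deviation bounds with rate function $I^f$, together with the limits on convex open sets containing an invariant measure and the corresponding expressions for $h^{\tau}(\mu)$ in the spirit of \S \ref{subsection-separated-set-level-2} and \S \ref{pre-per}.
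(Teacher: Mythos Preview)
Your proposal is correct and follows essentially the same route as the paper: compute $\nu_{\alpha,f}(e^{|\Lambda_\alpha|\widehat g})$ as the ratio $Z_\alpha(f+g)/Z_\alpha(f)$, reduce (\ref{section-extension-eq1}) to the pressure limits $\lim |\Lambda_\alpha|^{-1}\log Z_\alpha(k)=P^\tau(k)$, and then invoke Theorem \ref{approx-prop-finite-spin-general-f} to apply Theorem \ref{LDP-general}. The only difference is that the paper dispatches both pressure limits at once by citing Theorem 2.2 of \cite{Ruelle-73-TAMS} (which treats expansive $\Z^l$-actions with specification, covering separated sets and periodic configurations simultaneously), whereas you argue cases (a) and (b) separately, giving a heuristic boundary-term comparison for (b); your sketch is sound for the full shift, but the single citation is cleaner and avoids having to spell out the subexponential estimate.
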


\begin{proof}
 Since
\[\forall g\in C(\Omega),\ \ \ \ \ \ \   \nu_{\alpha,f}
(e^{|\Lambda_\alpha|\widehat{g}})=\frac{\sum_{\xi\in
\Omega_\alpha}e^{\sum_{x\in\Lambda_\alpha}(f+g)(\tau^x \xi)}}{\sum_{\xi'\in
\Omega_\alpha}e^{\sum_{x\in\Lambda_\alpha}f(\tau^x \xi')}},\]  in both cases (\ref{section-extension-eq1})  follows from Theorem 2.2 of \cite{Ruelle-73-TAMS}.
\end{proof}

\begin{remark}\label{remark-known-results-full-shift}
The results of Theorem \ref{LDP-fullshift-atomic} (b)   are  similar to those of \cite{Olla-88-PTRF}, where  nets of the form $(W_\alpha[\mu^{\Z^l}])$ are studied, for some fixed probability measure  $\mu$ on $S$. More precisely,
 when the spin  space in \cite{Olla-88-PTRF} is finite and $\mu$ is the uniform distribution,
Theorem 3.5 (resp. Theorem 4.2) of that paper is exactly  the large deviation upper (resp. lower) bounds of Theorem \ref{LDP-fullshift-atomic} (b) with $f=0$.
 Therefore, Theorem \ref{LDP-fullshift-atomic}  (b) generalizes this particular case  allowing  any $f\in C(\Omega)$; it also
  extends in the same way the full shift case in Theorem C of
 \cite{Eiz-Kif-Weis-94-CMP}  where only $f=0$ is considered (on the other hand, Theorem C of
 \cite{Eiz-Kif-Weis-94-CMP} is much more general since it  holds for any  subshift  of finite type satisfying strong specification).
 \end{remark}

  \begin{remark}\label{remark-known-results-full-shift-conv-measures}
  The measures $p_{\alpha,f}$  as in Theorem \ref{LDP-fullshift-atomic}  (b)   have been considered in \cite{Ruelle-73-TAMS} for any $f\in C(\Omega)$; in  particular,  Theorem 3.2 of that paper   establishes  that every limit point of $(p_{\alpha,f})$ belongs to $\mathcal{M}^{\tau}_{f}(\Omega)$. It is easy to see that this result  can be recovered from Theorem \ref{LDP-fullshift-atomic}. Indeed, let $\mu$ be a limit point of $(p_{\alpha,f})$, or equivalently a limit point of $(\sum_{\xi\in
\textnormal{Per}_\alpha}p_{\alpha,f}(\xi){\mu_{\xi,\alpha}})$. Then $\mu$ is the barycenter of some  limit point $\nu$ of
$(\nu_{\alpha,f})$ (recall that the barycenter map is  weak$^*$ continuous). Since  $\nu$ is supported by $\mathcal{M}^{\tau}_{f}(\Omega)$ (Theorem \ref{LDP-general} a)), we conclude that $\mu$ belongs to $\mathcal{M}^{\tau}_{f}(\Omega)$.
\end{remark}

\bigskip

\noindent\textbf{Acknowledgments.}
  The author wishes to thank Juan  Rivera-Letelier for  many helpful discussions on  the dynamics of rational maps. Thanks are also due  for
the financial   support and the warm hospitality  during  various visits at the Universidad Cat\'{o}lica del Norte. This work has been supported by FONDECYT grant No. 1070045.


\begin{thebibliography}{100}

\bibitem{bal}
P. Baldi. Large deviations and stochastic homogenization.
\textit{Ann. Mat. Pura Appl. 151 (1988), 161-177}.

\bibitem{com3}
H. Comman. Variational form of the large deviation functional.
\textit{Statistics and Probability Letters 77 (2007), no. 9 ,
931-936.}




\bibitem{com-TAMS-03}
H. Comman. Criteria for large deviations. \textit{Trans. Amer.
Math. Soc. 355 (2003), no. 7,  2905-2923.}




\bibitem{dem}
A. Dembo, O. Zeitouni. \textit{Large deviations techniques and
applications}, Second Edition, Springer, New-York, 1998.


\bibitem{Eiz-Kif-Weis-94-CMP}
A. Eizenberg, Y. Kifer, B. Weiss. Large deviations for
$\mathbb{Z}^d$-actions. \textit{Comm. Math. Phys 164 (1994),
433-454}.




\bibitem{Gurevich-Tempelman-05-PTRF}
B. M. Gurevich,  A. A. Tempelman. Markov approximation of
homogeneous lattice random fields. \textit{Probab. Theory Related
Fields 131 (2005), 519-527}.


\bibitem{Keller98}
G. Keller. \textit{Equilibrium states in ergodic theory}, London Mathematical Society Student Texts 42, Cambridge University Press, Cambridge, 1998.


\bibitem{Kel-Now_CMP_92}
G. Keller,  T.  Nowicki. Spectral theory, Zeta functions and
the distribution of periodic points for Collet-Eckmann maps.
\textit{Comm. Math. Phys. 149 (1992), 31-69}.




\bibitem{Kifer-94-CMP}
Y. Kifer. Large deviations, averaging and periodic orbits  of dynamical systems. \textit{Comm. Math. Phys. 162 (1994), no. 1, 33-46}.




\bibitem{Kifer-90-TAMS}
Y. Kifer. Large deviations in dynamical systems and stochastic
processes. \textit{Trans. Amer. Math. Soc. 321 (1990), no. 2,
505-524}.






\bibitem{lop}
A. O. Lopes. Entropy and large deviation. \textit{Nonlinearity 3
(1990), 527-546.}



\bibitem{Melbourne_Nicol-08-TAMS360}
I. Melbourne, M. Nicol. Large deviations for nonuniformly hyperbolic systems. \textit{Trans. Amer. Math. Soc. 360 (2008), 6661-6676}.




\bibitem{Olla-88-PTRF}
S. Olla. Large deviations for Gibbs random fields. \textit{Probab.
Theory and Related Fields 77 (1988), 343-357}.


\bibitem{parry-poll_PMH_75}
W. Parry, M. Pollicott.  \textit{Zeta functions and the periodic orbit
structure of hyperbolic dynamics}, Asterisque 187-188 (1990) ISSN: 0303-1179.




\bibitem{Pollicott-Sridharan-07-JDSGT}
M. Pollicott. Large deviations results for periodic points of a
rational map. \textit{J. Dyn Syst. Geom. Theor. 5 (2007), no. 1, 69-67}.


\bibitem{pol_CMP_96}
M. Pollicott, R. Sharp. Large deviations and the distribution of
preimages of rational maps. \textit{Comm. Math. Phys. 181 (1996),
733-739}.

\bibitem{Pol_DCDS_96}
M. Pollicott. Closed geodesic distribution for manifolds of non-positive curvature, \textit{Discrete and Continuous Dynamical Systems 2 (1996), no. 2, 153-161}.


\bibitem{prz_BBMS_90}
F. Przytycki.  On the Perron-Frobenius-Ruelle operator for rational
maps on the Riemann sphere and for H\"{o}lder continuous functions.
\textit{Bol. Bras. Mat. Soc. 20 (1990), 95-125}.

\bibitem{Rey-Bellet_Young-08-ETDS28}
L. Rey-Bellet, L. S.  Young. Large deviations in non-uniformly hyperbolic
dynamical systems. \textit{Ergod. Th.  Dynam. Sys. 28 (2008),  587-612}.


\bibitem{roc}
R. T. Rockafeller. \textit{Convex analysis}, Princeton University
Press, Princeton, 1970.


\bibitem{Rue-82-ETDS2}
D. Ruelle. Repellers for analytic maps. \textit{Ergod. Th.  Dynam. Sys. 2 (1982),  99–107}.



\bibitem{Rue-78}
D. Ruelle. \textit{Thermodynamic formalism},  Addison-Wesley,
Reading, MA, 1978.




\bibitem{Ruelle-73-TAMS}
D. Ruelle. Statistical mechanics on a compact set with
$\mathbb{Z}^d$-action satisfying expansiveness and specification.
\textit{Trans. Amer. Math. Soc. 185 (1973), 237-251}.

\bibitem{sch}
H. H. Schaefer. \textit{Topological vector spaces}, Springer-Verlag,
New York, 1964.



\end{thebibliography}
\end{document}